\newtheorem{thm}{Theorem}[section]
\newtheorem{cor}[thm]{Corollary}
\newtheorem{lem}[thm]{Lemma}
\newtheorem{prop}[thm]{Proposition}
\newdefinition{defn}{Definition}
\newdefinition{rmk}{Remark}
\newdefinition{examp}{Example}
\journal{}
\begin{document}
\begin{frontmatter}
\title{A note on extensions of nilpotent Lie algebras of type 2}%
\author[ur]{Pilar Benito}%
\ead{pilar.benito@unirioja.es}
\author[ur]{Daniel de-la-Concepci\'on}
\ead{daniel-de-la.concepcion@alum.unirioja.es}
\address[ur]{Dpto. Matem\'aticas y Computaci\'on, Universidad de La Rioja, 26004, Logro\~no, Spain}%
\begin{abstract}
We propose the study and description of the structure of complex Lie algebras with nilradical a nilpotent Lie algebra of type 2 by using $\mathfrak{sl}_2(\mathbb{C})$-representation theory. Our results will be applied to review the classification given in \cite{An11} (J. Geometry and Physics, 2011) of the Lie algebras with nilradical the quasiclassical algebra $\mathcal{L}_{5,3}$. A non-Lie algebra has been erroneously included in this classification. The $5$-dimensional Lie algebra $\mathcal{L}_{5,3}$ is a free nilpotent algebra of type $2$ and it is one of two free nilpotent algebras  admitting an invariant metric. According to \cite{Ok98}, quasiclassical algebras  let construct consistent Yang-Mills gauge theories.
\end{abstract}
\begin{keyword}
 Lie algebra \sep quasiclassical algebra \sep Levi subalgebra \sep nilpotent algebra \sep free nilpotent algebra \sep derivation \sep representation.
 \MSC[2010] 17B10 \sep 17B30
\end{keyword}
\end{frontmatter}

\section{Introduction}

From Levi Theorem, any finite-dimensional complex Lie algebra $\mathfrak{g}$ (product denoted by $[x,y]$) with (solvable) radical $\mathfrak{r}$ is of the form $\mathfrak{g}=\mathfrak{s}\oplus \mathfrak{r}$, where $\mathfrak{s}$ is a semisimple subalgebra named \emph{Levi subalgebra} (also termed \emph{Levi factor}). The Lie algebra $\mathfrak{g}$ is called faithful if the adjoint representation of the subalgebra $\mathfrak{s}$ on $\mathfrak{r}$ is faithful (equivalently, $\mathfrak{g}$ contains no nonzero semisimple ideals). Note  that any Lie algebra with radical $\mathfrak{r}$ can be decomposed into the direct sum (as ideals) of a semisimple Lie algebra and a faithful Lie algebra with the same radical. Let now consider the nilradical $\mathfrak{n}$ of $\mathfrak{g}$, the largest nilpotent ideal inside $\mathfrak{g}$. The nilradical is contained in the solvable radical and they satisfy the nice product relation:
\begin{equation}\label{R2inN}[\mathfrak{g},\mathfrak{r}]\subseteq \mathfrak{n}.
\end{equation}In other words, the action of $\mathfrak{g}$ on $\displaystyle\frac{\mathfrak{r}}{\mathfrak{n}}$ is trivial.

Corcerning the problem of classification of Lie algebras of a given radical, in 1944, Malcev \cite{Mal44} (see \cite[Theorem 4.4, section 4]{OnVi94}) stablished the following structure result:

\begin{thm}\label{Malcev44}Any faithful Lie algebra $\mathfrak{g}$ with radical $\mathfrak{r}$ is isomorphic to a Lie algebra of the form $\mathfrak{s}\oplus_{id} \mathfrak{r}$ where $\mathfrak{s}$ is a semisimple subalgebra of a Levi subalgebra $\mathfrak{s}_0$ of the derivation algebra of $\mathfrak{r}$, named $\mathrm{Der}\,\mathfrak{r}$. Moreover, given $\mathfrak{s}_1$ and $\mathfrak{s}_2$ semisimple subalgebras of $\mathfrak{s}_0$, the algebras $\mathfrak{s}_1\oplus_{id} \mathfrak{r}$ and $\mathfrak{s}_2\oplus_{id} \mathfrak{r}$ are isomorphic if and only if $\mathfrak{s}_2=A\mathfrak{s}_1A^{-1}$, where $A$ is an authomorphism of $\mathfrak{r}$.  \hfill $\square$
\end{thm}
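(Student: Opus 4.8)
The plan is to split the statement into its two assertions—the existence of a realization $\mathfrak{s}\oplus_{id}\mathfrak{r}$, and the isomorphism criterion—and to drive both with Levi's theorem together with the Malcev--Harish-Chandra conjugacy theorem for semisimple subalgebras. For existence, I would start from a Levi decomposition $\mathfrak{g}=\mathfrak{s}\oplus\mathfrak{r}$ and consider the adjoint action of $\mathfrak{s}$ on the ideal $\mathfrak{r}$, which yields a Lie algebra homomorphism $\rho\colon\mathfrak{s}\to\mathrm{Der}\,\mathfrak{r}$, $\rho(s)=\ad s|_{\mathfrak{r}}$. Faithfulness of $\mathfrak{g}$ says precisely that $\rho$ is injective, so $\mathfrak{s}_1:=\rho(\mathfrak{s})$ is a semisimple subalgebra of $\mathrm{Der}\,\mathfrak{r}$ isomorphic to $\mathfrak{s}$. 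By the Malcev--Harish-Chandra theorem every semisimple subalgebra lies inside some Levi subalgebra, so $\mathfrak{s}_1\subseteq\mathfrak{s}_0$ for a Levi subalgebra $\mathfrak{s}_0$ of $\mathrm{Der}\,\mathfrak{r}$. I would then check that $s+r\mapsto\rho(s)+r$ is an isomorphism $\mathfrak{g}\to\mathfrak{s}_1\oplus_{id}\mathfrak{r}$; the only thing to verify is that it respects brackets, which holds because the mixed bracket $[s,r]=\rho(s)(r)$ in $\mathfrak{g}$ is exactly the semidirect action on the target.

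For the criterion, the easy implication is direct: if $\mathfrak{s}_2=A\mathfrak{s}_1A^{-1}$ with $A$ an automorphism of $\mathfrak{r}$, then $D+r\mapsto ADA^{-1}+A(r)$ is an isomorphism $\mathfrak{s}_1\oplus_{id}\mathfrak{r}\to\mathfrak{s}_2\oplus_{id}\mathfrak{r}$, using that conjugation by $A$ is an automorphism of $\mathrm{Der}\,\mathfrak{r}$ and that $(ADA^{-1})(A r)=A(D r)$. For the converse I would take an abstract isomorphism $\Phi\colon\mathfrak{s}_1\oplus_{id}\mathfrak{r}\to\mathfrak{s}_2\oplus_{id}\mathfrak{r}$. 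Since $\mathfrak{r}$ is the radical of both algebras and isomorphisms preserve radicals, $\Phi$ restricts to an automorphism $A'=\Phi|_{\mathfrak{r}}$ of $\mathfrak{r}$. Writing $\Phi(D)=\sigma(D)+\tau(D)$ with $\sigma(D)\in\mathfrak{s}_2$ and $\tau(D)\in\mathfrak{r}$, comparison of $\mathfrak{s}_2$-components (together with bijectivity of $\Phi$ and $\Phi(\mathfrak{r})=\mathfrak{r}$) shows $\sigma\colon\mathfrak{s}_1\to\mathfrak{s}_2$ is an isomorphism, and applying $\Phi$ to $[D,r]=D(r)$ gives the key identity $A'D(A')^{-1}=\sigma(D)+\ad\tau(D)$ as operators on $\mathfrak{r}$.

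The main obstacle is that this identity only yields $A'\mathfrak{s}_1(A')^{-1}=\{\sigma(D)+\ad\tau(D):D\in\mathfrak{s}_1\}$, which differs from $\mathfrak{s}_2$ by inner derivations and so need not equal $\mathfrak{s}_2$ on the nose. To repair this I would observe that $\ad\mathfrak{r}$ is a solvable ideal of $\mathrm{Der}\,\mathfrak{r}$, so both $\mathfrak{s}_2$ and $A'\mathfrak{s}_1(A')^{-1}$ are Levi subalgebras of $\mathfrak{s}_2\oplus\ad\mathfrak{r}$; the conjugacy theorem then supplies a nilpotent $\ad x$ with $\exp(\ad x)\bigl(A'\mathfrak{s}_1(A')^{-1}\bigr)=\mathfrak{s}_2$. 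The final point is that $B=\exp(\ad x)$ is a genuine automorphism of $\mathfrak{r}$ and that conjugation in $\mathrm{Der}\,\mathfrak{r}$ by $B$ agrees with the inner automorphism realizing this conjugacy, so that $A:=BA'$ is an automorphism of $\mathfrak{r}$ with $A\mathfrak{s}_1A^{-1}=\mathfrak{s}_2$, which is exactly the asserted normalization. Verifying that the conjugating map is the exponential of a nilpotent inner derivation—hence lands among the automorphisms of $\mathfrak{r}$—is the delicate step, and it is the place where the full strength of the Malcev conjugacy theorem is used.
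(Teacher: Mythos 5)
The paper does not prove this statement: it is quoted verbatim from Malcev's 1944 paper (via \cite[Theorem 4.4, Section 4]{OnVi94}) and closed with a $\square$ as a known result, so there is no in-paper argument to compare yours against. Your reconstruction is correct and is essentially the classical proof: faithfulness makes $\rho=\ad|_{\mathfrak{r}}$ injective, the Malcev--Harish-Chandra theorem places $\rho(\mathfrak{s})$ inside a Levi subalgebra of $\mathrm{Der}\,\mathfrak{r}$, and in the converse direction the discrepancy $A'\mathfrak{s}_1(A')^{-1}=\{\sigma(D)+\ad\tau(D)\}$ is absorbed by conjugating the two Levi subalgebras of $\mathfrak{s}_2\oplus\ad\mathfrak{r}$ by $\exp(\ad z)$; the delicate point you flag does close, since the conjugating element can be taken in $\ad\bigl([\mathfrak{q},\ad\mathfrak{r}]\bigr)\subseteq\ad\mathfrak{n}(\mathfrak{r})$, whence $\ad z$ is a nilpotent derivation of $\mathfrak{r}$ and $\exp(\ad z)$ is a genuine automorphism whose conjugation action on $\mathrm{Der}\,\mathfrak{r}$ agrees with $\exp(\ad_{\mathfrak{q}}(\ad z))$.
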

Previous theorem reduces the classification problem of Lie algebras with a given radical $\mathfrak{r}$, to the analysis of derivations and automorphisms of the solvable Lie algebra $\mathfrak{r}$. The same argument can be apply if we consider the problem of classifying Lie algebras with a given nilradical. According to \cite{Mal45}, solvable Lie algebras can be classify through nilpotent ones and, from \cite{Sa70}, any nilpotent Lie algebra can be obtained as a quotient of a free nilpotent. So, it seems to be quite natural to start classifying nilpotent Lie algebras, their derivations and automorphisms and then to extend nilpotent algebras through suitable derivations sets to solvable and nonsolvable Lie algebras. This is the starting point in \cite{Au68} where Malcev decompositions of Lie algebras are studied, or in \cite{On76} where the problem of classifying Lie algebras whose radical is just the nilradical of a parabolic subalgebra of a semisimple Lie algebra is treated. In this paper we will deal with free nilpotent Lie algebras of type 2 (i.e.: the codimension of the derived algebra is $2$) and their extensions; from \cite{Sa70}, this class of algebras encodes the classification of filiforms and (type $2$) quasi-filiform Lie algebras. The Levi subalgebra of the  derivation algebra of any free nilpotent Lie algebra of type 2 is just a $3$-dimensional split simple Lie algebra $\mathfrak{sl}_2(\mathbb{C})$. Our main tactic to get extensions is based on the well known representation theory of this simple Lie algebra. This technic has been previously used in \cite{Tu92} to get the classification of the $9$-dimensional nonsolvable indecomposable Lie algebras.

The paper splits into four Sections apart from the Introduction. In Section 2, some basic facts on free nilpotent Lie algebras $\mathfrak{n}_{2,t}$, type 2 and arbitrary nilindex $t$, are given; this section also includes a complete description of their derivation Lie algebras $\mathrm{Der}\,\mathfrak{n}_{2,t}$. As a enlighten application, we present the classification of nilpotent Lie algebras of type 2 and nilindex $t\leq 4$ in a nested way. Section 3 deals with the structure of solvable Lie algebras with nilradical $\mathfrak{n}_{2,t}$. It turns out that the algebras in this class appear as extensions through vector subspaces of $\mathrm{Der}\,\mathfrak{n}_{2,t}$ of dimension at most two. Section 4 is devoted to non-solvable Lie algebras with nilradical $\mathfrak{n}_{2,t}$; this type of extensions have dimension at most one and are  determined by derivations that centralize the Levi subalgebra of $\mathrm{Der}\,\mathfrak{n}_{2,t}$. The results in Sections 3 and 4 will be used in the final section 5 to review and present in a unify way the classifications of Lie algebras with nilradical $\mathfrak{n}_{2,2}$ and $\mathfrak{n}_{2,3}$ given in \cite{RuWi93} (only solvable extensions are considered) and \cite{An11}. The Lie algebra $\mathfrak{n}_{2,2}$ is the  $3$-dimensional Heisenberg algebra and $\mathfrak{n}_{2,3}$ is a $5$-dimensional quasi-classical Lie algebra denoted in \cite{An11} as $\mathcal{L}_{5,3}$. Here quasi-classical means endowed with a symmetric non-degenerate invariant form according to \cite[Definition 2.1]{Ok79}. In \cite{BaOv11}, the authors stablished that $\mathfrak{n}_{2,3}$ and $\mathfrak{n}_{3,2}$ (free nilpotent of type $3$) are the unique quasi-classical free nilpotent Le algebras and, following \cite{Ok98}, quasi-classical Lie algebras let construct consistent Yang-Mills gauge theories. Hence, the results in this paper could have general interest in theorical physics.

Along the paper we deal with complex Lie algebras (complex field is denoted by $\mathbb{C}$), although most part of the results are valid over arbitrary fields of characteristic zero except those included on Section 3, where algebraically closed field features are used. Basics definitions and known facts on Lie algebras follows from \cite{Hu72} and \cite{Ja62}.


\section{Lie algebras of type $2$}

\noindent (The results in this section  are partially included in \cite[Section 3]{BeCo13} where universal free nilpotent Lie algebras of characteristic zero and of arbitrary type are treated. So, the results in this section hold over any field of characteristic zero.)

A nilpotent Lie algebra $\mathfrak{n}$ is said to be \emph{$t$-nilpotent} in case $\mathfrak{n}^t\neq 0$ and $\mathfrak{n}^{t+1}= 0$; the \emph{type}  of $\mathfrak{n}$ is given by the natural number $\mathrm{dim}\,\mathfrak{n}-\mathrm{dim}\,\mathfrak{n}^2$. So type $2$ implies $\mathrm{dim}\,\mathfrak{n}=2+\mathrm{dim}\,\mathfrak{n}^2$. This is the first condition that a filiform Lie algebra satisfies; in fact from \cite[Proposition 4]{Sa70} the class of free nilpotent Lie algebras of type $2$ encodes the classification of the filiform class.

From the $2$-element set $\{x_1,x_2\}$ and the (2-dimensional) vector space $\mathfrak{m}=\mathrm{span}\langle x_1,x_2\rangle$, the universal nilpotent Lie algebra $\mathfrak{n}_{2,t}$ is defined as the quotient:
\begin{eqnarray}\label{universal}
\mathfrak{n}_{2,t}=\frac{\mathfrak{FL}_{2}}{\mathfrak{FL}_{2}^{t+1}}
\end{eqnarray}
of the free Lie algebra $\mathfrak{FL}_{2}$ over $2$-elements through the ideal,$$\mathfrak{FL}_{2}^{t+1}=\sum_{j\geq t+1}\ \mathrm{span}\langle [\dots[x_{i_1}x_{i_2}]\dots x_{i_j}]: x_{i_s}=x_1,x_2\rangle.$$For the Lie algebra in (\ref{universal}) we have (cf. \cite[Proposition 4, Proposition 2]{Sa70} and \cite[Proposition 1.4]{Ga73}):

\begin{prop}\label{DerivationN2}The algebra $\mathfrak{n}_{2,t}$ is a quasicyclic $t$-nilpotent Lie algebra of type $2$ and any other $t$-nilpotent Lie algebra  of type $2$ is an homomorphic image of $\mathfrak{n}_{2,t}$. Moreover, the derivation Lie algebra of $\mathfrak{n}_{2,t}$ is:$$
\mathrm{Der}\, \mathfrak{n}_{2,t} = \{\widehat{\delta}: \delta\in  Hom(\mathfrak{m},\mathfrak{m})\}\ \oplus \ \{\widehat{\delta}:\delta \in Hom(\mathfrak{m},\mathfrak{n}_{2,t}^2)\},$$where
\begin{eqnarray}\label{extension}\widehat{\delta}([x_{i_1},x_{i_2},\dots,x_{i_j},\dots x_{i_s}])=\sum_{j=1}^s[x_{i_1},x_{i_2},\dots,\delta(x_{i_j}),\dots x_{i_s}],
\end{eqnarray}
$\delta \in Hom(\mathfrak{m},\mathfrak{m})$ or $\delta \in Hom(\mathfrak{m},\mathfrak{n}_{2,t}^2)$ and  for $s\geq 2$, $[x_{i_1},x_{i_2},\dots,x_{i_j},\dots x_{i_s}]=$\ $[\dots[[x_{i_1}x_{i_2}]\dots ]x_{i_s}]$.  \hfill $\square$
\end{prop}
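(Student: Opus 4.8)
The plan is to exploit the natural grading of the free Lie algebra and then to reduce the description of derivations to their behaviour on the generators. Write $\mathfrak{FL}_2 = \bigoplus_{j\geq 1} L_j$, where $L_j$ is the span of the left-normed brackets of length $j$ in $x_1,x_2$; this homogeneous grading satisfies $[L_1,L_j]=L_{j+1}$ with $L_1=\mathfrak{m}$. Since $\mathfrak{FL}_2^{t+1}=\bigoplus_{j\geq t+1}L_j$ is a graded ideal, the quotient inherits a grading $\mathfrak{n}_{2,t}=\bigoplus_{j=1}^{t}\bar L_j$ with $\bar L_1=\mathfrak{m}$ and $[\mathfrak{m},\bar L_j]=\bar L_{j+1}$, so $\mathfrak{n}_{2,t}$ is quasicyclic. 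Reading off the grading gives $\bar L_t\neq 0$ and $\bar L_{t+1}=0$ (no relations occur below degree $t+1$, and $L_t\neq 0$ for two generators), hence $t$-nilpotency; moreover $\mathfrak{n}_{2,t}^2=\bigoplus_{j\geq 2}\bar L_j$, so $\mathfrak{n}_{2,t}/\mathfrak{n}_{2,t}^2\cong\mathfrak{m}$ has dimension $2$, i.e. type $2$.

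For the universal property, let $\mathfrak{a}$ be any $t$-nilpotent Lie algebra of type $2$ and choose $y_1,y_2\in\mathfrak{a}$ projecting to a basis of $\mathfrak{a}/\mathfrak{a}^2$. The free property of $\mathfrak{FL}_2$ extends $x_i\mapsto y_i$ to a homomorphism $\phi:\mathfrak{FL}_2\to\mathfrak{a}$; since $\mathfrak{a}^{t+1}=0$ we have $\phi(\mathfrak{FL}_2^{t+1})=0$, so $\phi$ factors through $\mathfrak{n}_{2,t}$, and the induced map is onto because any lift of a basis of $\mathfrak{a}/\mathfrak{a}^2$ generates the nilpotent algebra $\mathfrak{a}$. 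This is the content cited from \cite{Sa70}.

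The core of the statement is the description of $\mathrm{Der}\,\mathfrak{n}_{2,t}$. Since $\mathfrak{n}_{2,t}$ is generated by $\mathfrak{m}$, any derivation $D$ is determined by its restriction $D|_{\mathfrak{m}}$ through iterated use of the Leibniz rule, which is precisely formula (\ref{extension}); thus $D=\widehat{\delta}$ with $\delta=D|_{\mathfrak{m}}\in Hom(\mathfrak{m},\mathfrak{n}_{2,t})$. The main point to secure is the converse: every $\delta\in Hom(\mathfrak{m},\mathfrak{n}_{2,t})$ yields a \emph{well-defined} derivation $\widehat{\delta}$, i.e. formula (\ref{extension}) respects anticommutativity, the Jacobi identity, and the defining relations of the quotient. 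I would avoid checking this by hand and instead argue on $\mathfrak{FL}_2$: lift $\delta$ to any linear map $\tilde{\delta}:\mathfrak{m}\to\mathfrak{FL}_2$, which extends uniquely to a derivation $\tilde{D}$ of the free Lie algebra (linear maps from the free generators into the algebra correspond bijectively to derivations). The only thing left is that $\tilde{D}$ descends to the quotient, and this holds because every term of the lower central series is a characteristic ideal: for any derivation one shows $D(\mathfrak{g}^{k+1})\subseteq\mathfrak{g}^{k+1}$ by induction from $D[\mathfrak{g},\mathfrak{g}^k]\subseteq[D\mathfrak{g},\mathfrak{g}^k]+[\mathfrak{g},D\mathfrak{g}^k]$. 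Hence $\tilde{D}(\mathfrak{FL}_2^{t+1})\subseteq\mathfrak{FL}_2^{t+1}$, and $\tilde{D}$ induces a derivation of $\mathfrak{n}_{2,t}$ whose restriction to $\mathfrak{m}$ is $\delta$, namely $\widehat{\delta}$, independently of the chosen lift. The well-definedness of $\widehat{\delta}$ is the real obstacle, and this free-algebra-plus-characteristic-ideal argument resolves it cleanly.

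This establishes a linear bijection $\delta\mapsto\widehat{\delta}$ between $Hom(\mathfrak{m},\mathfrak{n}_{2,t})$ and $\mathrm{Der}\,\mathfrak{n}_{2,t}$; injectivity is immediate since $\widehat{\delta}|_{\mathfrak{m}}=\delta$. Finally, splitting the target along $\mathfrak{n}_{2,t}=\mathfrak{m}\oplus\mathfrak{n}_{2,t}^2$ gives $Hom(\mathfrak{m},\mathfrak{n}_{2,t})=Hom(\mathfrak{m},\mathfrak{m})\oplus Hom(\mathfrak{m},\mathfrak{n}_{2,t}^2)$, and transporting this decomposition through the bijection yields the claimed direct sum, the directness being clear because $\mathfrak{m}\cap\mathfrak{n}_{2,t}^2=0$.
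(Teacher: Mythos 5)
Your proof is correct: the paper states this proposition without proof, simply citing \cite{Sa70} and \cite{Ga73}, and your argument---grading $\mathfrak{FL}_2$ by bracket length, invoking the universal property of the free Lie algebra both for the homomorphic-image claim and for extending a map on generators to a derivation, and then descending through $\mathfrak{FL}_2^{t+1}$ because terms of the lower central series are characteristic ideals---is exactly the standard argument those references rely on. You also correctly isolate the one genuinely nontrivial point, the well-definedness of formula (\ref{extension}) on the non-independent left-normed brackets, and resolve it by lifting to the free algebra instead of checking relations by hand, which is the right way to do it.
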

According to \cite{Le63}, when a nilpotent Lie algebra $\mathfrak{n}$ has a subspace $\mathfrak{u}$ that complements $\mathfrak{n}^2$ and $\mathfrak{u}$ lets $\mathfrak{n}$ be decomposed as a (finite) direct sum of subspaces $\mathfrak{u}^k=[u,u^{k-1}]$, $\mathfrak{n}$ is called \emph{quasicyclic}. So, from Proposition \ref{DerivationN2}, we can assume w.l.o.g.:
\begin{equation}\label{grad}
\mathfrak{n}_{2,t}=\mathfrak{m}\oplus \mathfrak{m}^2\oplus \dots \oplus \mathfrak{m}^t,
\end{equation}
where $\mathfrak{m}=\mathrm{span}\langle x_1,x_2\rangle$ and for $k\geq 2$, $\mathfrak{m}^k=\mathrm{span}\langle [\dots[x_{i_1}x_{i_2}]\dots x_{i_k}]: x_{i_j}=x_1,x_2\rangle$. The direct sum in (\ref{grad}) provides a graded decomposition and, the dimension of each homogeneous component $\mathfrak{m}^s$, is given by the expression ($\mu$ the M$\ddot{o}$ebius function):
\begin{equation}\label{Moebius}
\mathrm{dim}\, \mathfrak{m}^s=\sum_{d\mid s}\mu(d)2^{\frac{s}{d}}.
\end{equation}
Denote by $\mathfrak{gl}(\mathfrak{m})$ the general linear Lie algebra of $\mathbb{C}$-linear maps $\delta:\mathfrak{m}\to \mathfrak{m}$ and, for $j\geq 1$, $\mathrm{Der}_j\, \mathfrak{n}_{2,t}=\{\widehat{\delta}:\delta \in Hom(\mathfrak{m},\mathfrak{m}^j)\} $. So $\mathrm{Der}_1\, \mathfrak{n}_{2,t}=\{\widehat{\delta}: \delta \in \mathfrak{gl}(\mathfrak{m})\}$, is a Lie subalgebra isomorphic to $\mathfrak{gl}(\mathfrak{m})=\mathfrak{sl}(\mathfrak{m})\oplus \mathbb{C}\cdot id_\mathfrak{m}$ (the $3$- dimensional Lie algebra of traceless maps $\mathfrak{sl}(\mathfrak{m})$ plus the identity map) and
\begin{equation}\label{gradder}
\mathrm{Der}\, \mathfrak{n}_{2,t} =\bigoplus_{j=1}^t \mathrm{Der}_j\, \mathfrak{n}_{2,t}.
\end{equation}
Previous decomposition follows the multiplication rule $[d_i, d_k] \in \mathrm{Der}_{i+k-1}\, \mathfrak{n}_{2,t}$, $d_s \in \mathrm{Der}_{s}\, \mathfrak{n}_{2,t}$. Moreover, the derived algebra $\mathrm{Der}_1^0\, \mathfrak{n}_{2,t}=[\mathrm{Der}_1\, \mathfrak{n}_{2,t},\mathrm{Der}_1\, \mathfrak{n}_{2,t}]$ is just $\mathrm{Der}_1^0\, \mathfrak{n}_{2,t}=\{\widehat{\delta}: \delta\in  \mathfrak{sl}(\mathfrak{m})\}$,  a \emph{Levi subalgebra} of $Der\, \mathfrak{n}_{2,t}$ isomorphic to the split $3$-dimensional Lie algebra $\mathfrak{sl}_2(\mathbb{C})$. In this way,
\begin{equation}\label{der1}
\mathrm{Der}_1\, \mathfrak{n}_{2,t} =\mathrm{Der}_1^0\, \mathfrak{n}_{2,t}\oplus \mathbb{C}\cdot id_{2,t}
\end{equation}
 \noindent where $id_{2,t}=\widehat{id_\mathfrak{m}}$, i.e. the extended derivation of $id_\mathfrak{m}$. The solvable radical $\mathfrak{R}_{2,t}$ and the nilradical $\mathfrak{N}_{2,t}$ of the derivation algebra are given by:
\begin{eqnarray}
\mathfrak{N}_{2,t}= \bigoplus_{j\geq2}^t \mathrm{Der}_j\, \mathfrak{n}_{2,t}\\
\mathfrak{R}_{2,t}=\mathbb{C}\cdot id_{2,t}\ \oplus\ \mathfrak{N}_{2,t},
\end{eqnarray}
\noindent We also note that the derivations inside $\mathfrak{N}_{2,t}$ are nilpotent maps and $[id_{2,t},\widehat{\delta}]=(k-1)\widehat{\delta}$ for any $\widehat{\delta}\in \mathrm{Der}_{k}\, \mathfrak{n}_{2,t}$ ($k\geq 1$).
  
Concerning the classification problem of Lie algebras having $\mathfrak{n}_{2,t}$ as nilradical we have (cf. of \cite[Proposition 3.2]{BeCo13} and Theorem \ref{Malcev44}):

\begin{thm}\label{free nilpotent}Up to isomorphisms, $\mathfrak{g}_{2,t}=\mathrm{Der}_1^0\, \mathfrak{n}_{2,t}\oplus_{id}\mathfrak{n}_{2,t}$ is the unique faithful complex Lie algebra with radical the free nilpotent Lie algebra $\mathfrak{n}_{2,t}$.  In particular, apart from $\mathfrak{g}_{2,t}$, any nonsolvable complex Lie algebra with radical $\mathfrak{n}_{2,t}$ is a direct sum as ideals of the form either $\mathfrak{s}\oplus \mathfrak{n}_{2,t}$  or $\mathfrak{s}\oplus \mathfrak{g}_{2,t}$, where $\mathfrak{s}$ is an arbitrary semisimple Lie algebra.\hfill $\square$
 \end{thm}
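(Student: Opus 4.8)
The plan is to invoke Malcev's structure theorem (Theorem \ref{Malcev44}) and then exploit the fact that the relevant Levi subalgebra is the small simple algebra $\mathfrak{sl}_2(\mathbb{C})$. First I would apply Theorem \ref{Malcev44} with radical $\mathfrak{r}=\mathfrak{n}_{2,t}$: every faithful complex Lie algebra with this radical is isomorphic to $\mathfrak{s}\oplus_{id}\mathfrak{n}_{2,t}$ for some semisimple subalgebra $\mathfrak{s}$ of a Levi subalgebra $\mathfrak{s}_0$ of $\mathrm{Der}\,\mathfrak{n}_{2,t}$. By the discussion following (\ref{der1}), a Levi subalgebra of $\mathrm{Der}\,\mathfrak{n}_{2,t}$ is $\mathrm{Der}_1^0\,\mathfrak{n}_{2,t}\cong\mathfrak{sl}_2(\mathbb{C})$, and since all Levi subalgebras are conjugate under an automorphism I may fix $\mathfrak{s}_0=\mathrm{Der}_1^0\,\mathfrak{n}_{2,t}$.

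The key observation is then purely structural: since $\mathfrak{s}_0\cong\mathfrak{sl}_2(\mathbb{C})$ is simple of dimension $3$, its only semisimple subalgebras are $\{0\}$ and $\mathfrak{s}_0$ itself. Indeed, a nonzero semisimple Lie algebra has dimension at least $3$, while a proper subalgebra of a $3$-dimensional algebra has dimension at most $2$, so there is no room for an intermediate semisimple subalgebra. Choosing $\mathfrak{s}=\mathfrak{s}_0=\mathrm{Der}_1^0\,\mathfrak{n}_{2,t}$ yields exactly $\mathfrak{g}_{2,t}$, while $\mathfrak{s}=0$ yields the solvable algebra $\mathfrak{n}_{2,t}$ itself. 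Hence, up to isomorphism, $\mathfrak{g}_{2,t}$ is the only nonsolvable faithful Lie algebra with radical $\mathfrak{n}_{2,t}$; the uniqueness clause of Theorem \ref{Malcev44} is vacuous here because there is essentially a single nonzero choice of $\mathfrak{s}$. I would also record the short check that $\mathfrak{n}_{2,t}$ really is the radical of $\mathfrak{g}_{2,t}$: the quotient $\mathfrak{g}_{2,t}/\mathfrak{n}_{2,t}\cong\mathfrak{sl}_2(\mathbb{C})$ is simple, so no solvable ideal can properly contain $\mathfrak{n}_{2,t}$, and the embedding $\mathrm{Der}_1^0\,\mathfrak{n}_{2,t}\hookrightarrow\mathrm{Der}\,\mathfrak{n}_{2,t}$ guarantees faithfulness.

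For the ``in particular'' statement I would start from the decomposition recalled in the Introduction: any Lie algebra $\mathfrak{g}$ with radical $\mathfrak{n}_{2,t}$ splits as a direct sum of ideals $\mathfrak{g}=\mathfrak{s}'\oplus\mathfrak{g}_0$, where $\mathfrak{s}'$ is semisimple and $\mathfrak{g}_0$ is faithful with the same radical $\mathfrak{n}_{2,t}$ (the radical of a direct sum of ideals being the sum of the radicals). Applying the first part, $\mathfrak{g}_0$ is either $\mathfrak{n}_{2,t}$ or $\mathfrak{g}_{2,t}$, which gives the two announced forms $\mathfrak{s}'\oplus\mathfrak{n}_{2,t}$ and $\mathfrak{s}'\oplus\mathfrak{g}_{2,t}$.

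The argument is short because all the real work is carried by Theorem \ref{Malcev44} together with the explicit computation of $\mathrm{Der}\,\mathfrak{n}_{2,t}$ from Proposition \ref{DerivationN2}. The only point that requires care, rather than genuine difficulty, is bookkeeping around the trivial choice $\mathfrak{s}=0$: one must either read ``faithful Lie algebra with radical $\mathfrak{n}_{2,t}$'' as carrying a nonzero Levi factor, or explicitly note that the solvable algebra $\mathfrak{n}_{2,t}$ is the degenerate $\mathfrak{s}=0$ case and phrase the uniqueness among the nonsolvable faithful algebras.
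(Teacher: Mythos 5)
Your argument is correct and is essentially the proof the paper intends: the theorem is stated with a reference to Theorem \ref{Malcev44} together with the computation in Section 2 showing that the Levi subalgebra of $\mathrm{Der}\,\mathfrak{n}_{2,t}$ is $\mathrm{Der}_1^0\,\mathfrak{n}_{2,t}\cong\mathfrak{sl}_2(\mathbb{C})$, whose only semisimple subalgebras are $0$ and itself, which is exactly your key observation. Your extra bookkeeping (checking that $\mathfrak{n}_{2,t}$ is indeed the radical of $\mathfrak{g}_{2,t}$, and handling the degenerate choice $\mathfrak{s}=0$) is appropriate and consistent with the paper's use of the semisimple-plus-faithful decomposition from the Introduction.
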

 The product in $\mathfrak{g}_{2,t}$ is given by considering the product in $\mathrm{Der}_1^0\, \mathfrak{n}_{2,t}$ as subalgebra of the linear algebra of derivations (so, $[d_1, d_2]=d_1d_2-d_2d_1$), the usual product in $\mathfrak{n}_{2,t}$  and $[d,a]=d(a)$ in case $d\in \mathrm{Der}_1^0\, \mathfrak{n}_{2,t}, a\in \mathfrak{n}_{2,t}$. By using representation theory of $\mathfrak{sl}_2(\mathbb{C})$ (see \cite[section II.7]{Hu72}) we can get explicit basis for $\mathfrak{g}_{2,t}$ through the natural action of $ \mathrm{Der}_1^0\, \mathfrak{n}_{2,t}$ on $\mathfrak{m}$ and the induced action on the $j$-tensor vector space $\otimes^j \mathfrak{m}$ (for a computational approach see \cite{BeCo12}). The next result shows the way in which this method works:
\begin{prop}\label{low free nilpotent}
Up to isomorphisms, the nonsolvable complex Lie algebras with solvable radical a universal nilpotent algebra of type 2 and nilindex $t\leq 4$ are:
\begin{enumerate}

\item[a)] The trivial extensions $\mathfrak{s}\oplus \mathfrak{n}_{2,t}$ (direct sum as ideals),  $\mathfrak{s}$ any arbitrary semisimple Lie algebra.

\item[b)] The direct sum as ideals of any semisimple Lie algebra $\mathfrak{s}$ and one of the following faithful Lie algebras:
\begin{enumerate}
\item [i)] The $5$-dimensional algebra $\mathfrak{g}_{2,1}$ with basis $\{e,f,h, v_0, v_1 \}$ and nonzero products $[e,f]=h$, $[h,e]=2e$, $[h,f]=-2f$, $[h,v_0]=v_0$, $[h,v_1]=-v_1$, $[e,v_1]=v_0$ and $[f,v_0]=v_1$. In this case, $\mathfrak{n}_{2,1}=\mathbb{C} v_0\oplus, \mathbb{C}v_1$ is a $V(1)$-module of the Levi subalgebra $\mathbb{C}e\oplus \mathbb{C}f \oplus \mathbb{C}h\ (\cong  \mathrm{Der}_1^0\, \mathfrak{n}_{2,1})$.

\item[ii)] The $6$-dimensional algebra $\mathfrak{g}_{2,2}$ with basis $\{e,f,h, v_0, v_1,w_0 \}$ and non-zero products $[e,f]=h$, $[h,e]=2e$, $[h,f]=-2f$, $[h,v_0]=v_0$, $[h,v_1]=-v_1$, $[e,v_1]=v_0$, $[f,v_0]=v_1$ and $[v_0,v_1]=w_0$. In this case, $\mathfrak{n}_{2,2}=\mathbb{C} v_0\oplus \mathbb{C}v_1\oplus \mathbb{C}w_0$ is a $V(1)\oplus V(0)$-module of the Levi subalgebra $\mathbb{C}e\oplus \mathbb{C}f \oplus \mathbb{C}h\ (\cong  \mathrm{Der}_1^0\, \mathfrak{n}_{2,2})$.

\item[iii)] The $8$-dimensional algebra $\mathfrak{g}_{2,3}$ with basis $\{e,f,h, v_0, v_1,w_0, z_0, z_1 \}$ and nonzero products $[e,f]=h$, $[h,e]=2e$, $[h,f]=-2f$, $[h,v_0]=v_0$, $[h,v_1]=-v_1$, $[e,v_1]=v_0$, $[f,v_0]=v_1$,  $[h,z_0]=z_1$, $[h,z_0]=-z_0$, $[e,z_1]=z_0$, $[f,z_0]=z_1$, $[v_0,v_1]=w_0$, $[v_0,w_0]=z_0$ and $[v_1,w_0]=z_1$. In this case, $\mathfrak{n}_{2,3}=\mathbb{C} v_0\oplus \mathbb{C}v_1\oplus \mathbb{C}w_0 \oplus \mathbb{C} z_0\oplus \mathbb{C}z_1$ is a $V(1)\oplus V(0)\oplus V(1)$-module of the Levi subalgebra $\mathbb{C}e\oplus \mathbb{C}f \oplus \mathbb{C}h\ (\cong  \mathrm{Der}_1^0\, \mathfrak{n}_{2,3})$.

\item[iv)] The $11$-dimensional algebra $\mathfrak{g}_{2,4}$ with basis $\{e,f,h, v_0, v_1, w_0, z_0, z_1,$ $x_0,x_1,x_2\}$ and nonzero products $[e,f]=h$, $[h,e]=2e$, $[h,f]=-2f$, $[h,v_0]=v_0$, $[h,v_1]=-v_1$, $[e,v_1]=v_0$, $[f,v_0]=v_1$, $[h,z_0]=z_1$, $[h,z_0]=-z_1$, $[e,z_1]=z_0$, $[f,z_0]=z_1$, $[h,x_0]=2x_0$, $[h,x_2]=-2x_2$, $[e,x_1]=2x_0$, $[e,x_2]=x_1$, $[f,x_0]=x_1$, $[f,x_1]=2x_2$, $[v_0,v_1]=w_0$, $[v_0,w_0]=z_0$, $[v_1,w_0]=z_1$, $[v_0,z_0]=x_0$, $[v_0,z_1]=[v_1,z_0]=\frac{1}{2}x_1$ and $[v_1,z_1]=x_2$. In this case, $\mathfrak{n}_{2,4}=\mathbb{C} v_0\oplus \mathbb{C}v_1\oplus \mathbb{C}w_0 \oplus \mathbb{C} z_0\oplus \mathbb{C}z_1\oplus \mathbb{C}x_0 \oplus \mathbb{C} x_1\oplus \mathbb{C}x_2$ is a $V(1)\oplus V(0)\oplus V(1)\oplus V(2)$-module of the Levi subalgebra $\mathbb{C}e\oplus \mathbb{C}f \oplus \mathbb{C}h\ (\cong  \mathrm{Der}_1^0\, \mathfrak{n}_{2,4})$.
\end{enumerate}
\end{enumerate}
\end{prop}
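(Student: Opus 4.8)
The plan is to deduce the whole statement from Theorem \ref{free nilpotent} together with the representation theory of $\mathfrak{sl}_2(\mathbb{C})$. Part a) is immediate, since Theorem \ref{free nilpotent} says that every nonsolvable complex Lie algebra with radical $\mathfrak{n}_{2,t}$ is a direct sum of ideals $\mathfrak{s}\oplus\mathfrak{n}_{2,t}$ or $\mathfrak{s}\oplus\mathfrak{g}_{2,t}$ with $\mathfrak{s}$ semisimple and $\mathfrak{g}_{2,t}=\mathrm{Der}_1^0\,\mathfrak{n}_{2,t}\oplus_{id}\mathfrak{n}_{2,t}$ the unique faithful one. The first family is exactly a), so all that remains is to exhibit an explicit basis and multiplication table for the single algebra $\mathfrak{g}_{2,t}$ in each case $t=1,2,3,4$.

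First I would decompose the nilradical as a module over the Levi factor $\mathrm{Der}_1^0\,\mathfrak{n}_{2,t}\cong\mathfrak{sl}_2(\mathbb{C})$. By the quasicyclic grading (\ref{grad}) each homogeneous component $\mathfrak{m}^k$ is an $\mathfrak{sl}_2(\mathbb{C})$-submodule, and (\ref{Moebius}) gives $\dim\mathfrak{m}^1=2$, $\dim\mathfrak{m}^2=1$, $\dim\mathfrak{m}^3=2$, $\dim\mathfrak{m}^4=3$. Since $\mathfrak{m}=\mathfrak{m}^1$ is the natural module $V(1)$, the Cartan element $h$ acts diagonally on each $\mathfrak{m}^k$ with weights read off from iterated brackets of the weight vectors $x_1,x_2$; using the Jacobi identity to discard dependent brackets one finds the weight multisets $\{1,-1\}$, $\{0\}$, $\{1,-1\}$ and $\{2,0,-2\}$ for $k=1,2,3,4$. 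By the classification of irreducible $\mathfrak{sl}_2(\mathbb{C})$-modules this forces $\mathfrak{m}^1\cong V(1)$, $\mathfrak{m}^2\cong V(0)$, $\mathfrak{m}^3\cong V(1)$ and $\mathfrak{m}^4\cong V(2)$, which are the module structures asserted for $\mathfrak{n}_{2,1},\dots,\mathfrak{n}_{2,4}$.

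Next I would fix a standard triple $\{e,f,h\}$ in the Levi factor and, inside each irreducible summand, a string of weight vectors on which $e,f,h$ act canonically; this produces all products involving $e,f,h$ (such as $[h,v_0]=v_0$, $[e,v_1]=v_0$, $[f,v_0]=v_1$, together with their analogues on the remaining summands). It then remains to compute the internal products of $\mathfrak{n}_{2,t}$, i.e. the bracket maps $\mathfrak{m}^i\times\mathfrak{m}^j\to\mathfrak{m}^{i+j}$ (those of total degree $>t$ vanishing by $t$-nilpotency). Each such bracket is $\mathfrak{sl}_2(\mathbb{C})$-equivariant, so by Schur's lemma and the Clebsch--Gordan rule its rank and image are determined up to a scalar: the antisymmetric map $\wedge^2V(1)\to V(0)$ yields $[v_0,v_1]=w_0$; the map $V(1)\otimes V(0)\to V(1)$ yields $[v_0,w_0]=z_0$ and $[v_1,w_0]=z_1$; and the degree-four bracket $V(1)\otimes V(1)\to V(2)$ yields the products $[v_i,z_j]$. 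The residual scalars are fixed by normalizing the chosen weight vectors against one explicit free-Lie-algebra bracket in each degree.

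The main obstacle is the degree-four layer. There, equivariance alone pins down $[v_0,z_0]=x_0$ and $[v_1,z_1]=x_2$, but the weight-zero products $[v_0,z_1]$ and $[v_1,z_0]$ require the finer information that the bracket, landing in $V(2)$, must annihilate the trivial summand $V(0)\subset V(1)\otimes V(1)$; this forces $[v_0,z_1]=[v_1,z_0]$, and combined with the relation $[v_1,z_0]+[v_0,z_1]=x_1$ coming from $f\cdot[v_0,z_0]=x_1$ it gives exactly $[v_0,z_1]=[v_1,z_0]=\frac{1}{2}x_1$ rather than a naive coefficient $1$. The one genuinely computational point is then to check that the resulting table satisfies the Jacobi identity; everything else is forced by equivariance and by Theorem \ref{free nilpotent}.
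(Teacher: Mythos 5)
Your proposal is correct and follows essentially the same route as the paper: reduce to the unique faithful algebra $\mathfrak{g}_{2,t}$ via Theorem \ref{free nilpotent}, then identify each graded component $\mathfrak{m}^k$ as an $\mathfrak{sl}_2(\mathbb{C})$-module by Clebsch--Gordan and dimension counting, and fix standard bases of the irreducible summands. Your treatment of the degree-four layer --- forcing $[v_0,z_1]=[v_1,z_0]$ because the bracket kills the $V(0)$ summand of $V(1)\otimes V(1)$, and then getting $\frac{1}{2}x_1$ by applying $f$ to $x_0=[v_0,z_0]$ --- is exactly the normalization the paper performs.
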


\begin{proof} Let $L=\mathfrak{s}\oplus \mathfrak{n}_{2,t}$ be a such algebra, $\mathfrak{s}$ a Levi subalgebra. By using the adjoint (restricted) representation  $\rho=\mathrm{ad}_{\mathfrak{n}_{2,t}}$ of $L$, the radical $\mathfrak{n}_{2,t}$ is viewed as a $\mathfrak{s}$-module. In case $\rho$ is trivial, $a)$ follows; otherwise, $\rho(\mathfrak{s})$ is a semisimple subalgebra of the Levi subalgebra of $\mathrm{Der}\, \mathfrak{n}_{2,t}$. So, $\mathfrak{s}=\mathrm{Ker}\,\rho \oplus \mathfrak{s}_1$ where $\mathfrak{s}_1$ is a simple split $3$-dimensional ideal of $\mathfrak{s}$ and $\mathfrak{s}_1\oplus \mathfrak{n}_{2,t}$ is a faithful Lie algebra. From Theorem \ref{free nilpotent}, up to isomorphisms, we can assume $\mathfrak{s}_1\oplus \mathfrak{n}_{2,t}= \mathrm{Der}_1^0\, \mathfrak{n}_{2,t}\oplus_{id}\mathfrak{n}_{2,t}$. From now on, we fixed a standard basis $\{e,f,h\}$ of the split simple $3$-dimensional $ \mathrm{Der}_1^0\, \mathfrak{n}_{2,t}$ so: $[e,f]=h$, $[h,e]=2e$, $[h,f]=-2f$. Through the natural $ \mathrm{Der}_1^0\, \mathfrak{n}_{2,t}$-action, $\mathfrak{m}$ is a module of type $V(1)$ and we can also take a standard basis $\{v_0,v_1\}$ of $\mathfrak{m}$ as in \cite{Hu72}.  In case $t=1$, $b)-i)$ follows immediately. For $s\geq 2$ the homogeneous components $\mathfrak{m}^s$ of given in (\ref{grad}) are $ \mathrm{Der}_1^0\, \mathfrak{n}_{2,t}$-submodules inside the tensor product $m\otimes\mathfrak{m}^{s-1}$ (the Lie product $[\cdot,\cdot]:\mathfrak{m}\otimes\mathfrak{m}^{s-1}\to \mathfrak{m}^s$ is an onto $ \mathrm{Der}_1^0\, \mathfrak{n}_{2,t}$-module homomorphism). So, from Clebch-Gordan formula an a counting dimension argument based on (\ref{Moebius}) we have:
\begin{itemize}
\item $\mathfrak{m}^2\subseteq \mathfrak{m}\otimes\mathfrak{m}=V(1)\otimes V(1)=V(2)\oplus V(0)$; so $\mathfrak{m}^2=V(0)$, is a trivial module. Thus $\mathfrak{m}^2=\mathrm{span}\langle [v_0,v_1]\rangle$ which leads to case $b)-ii)$ for $t=2$ by defining $w_0=[v_0,v_1]$.

\item $\mathfrak{m}^3\subseteq \mathfrak{m}\otimes\mathfrak{m}^2=V(1)\otimes V(0)=V(1)$; so $\mathfrak{m}^3=V(1)$, is a $2$-irreducible module. Since $\mathfrak{m}^3=[\mathfrak{m},\mathfrak{m}^2]=\mathrm{span}\langle [v_0,w_0]=z_0,[v_1,w_0]=z_1\rangle$, $b)-iii)$ is obtained in case $t=3$.

\item $\mathfrak{m}^4\subseteq \mathfrak{m}\otimes\mathfrak{m}^3=V(1)\otimes V(1)=V(2)\oplus V(0)$; so $\mathfrak{m}^4=V(2)$, is a $3$-irreducible module. From $\mathfrak{m}^4=[\mathfrak{m},\mathfrak{m}^3]$ and $[v_1,z_0]=[v_0,z_1]$, the set $\{[v_0,z_0]=x_0, 2[v_1,z_0]=x_1,[v_1,z_0]=x_2\}$ turns out a standar basis of $V(2)$ inside $\mathfrak{m}^4$. So, $b)-iv)$ follows.  
\end{itemize}
\end{proof}

Now, from Proposition \ref{low free nilpotent}, \cite[Proposition 1.5]{Ga73} and \cite[Theorem 3.5]{BeCo13} we get the whole list of nilpotent Lie algebras of type $2$ and nilindex $\leq 4$:

\begin{cor}\label{classification}Up to isomorphisms, the nilpotent Lie algebras of type 2 and nilindex $t\leq 4$ are:
\begin{itemize}
\item[a)] $\mathfrak{n}_{2,t}$ for $t=1,2,3,4$;
\item[b)] the quotient Lie algebra $\displaystyle\frac{\mathfrak{n}_{2,3}}{I}$, where $I$ is any $1$-dimensional subspace of $\mathfrak{n}_{2,3}^3=\mathrm{span}\langle z_0, z_1\rangle$;
\item[c)] the quotient Lie algebra $\displaystyle\frac{\mathfrak{n}_{2,4}}{I}$ where $I$ is one of the following ideals:
\begin{itemize}
\item[i)] any $1$ or $2$-dimensional subspace of $\mathfrak{n}_{2,3}^4=\mathrm{span}\langle x_0, x_1,x_2\rangle$,
\item[ii)]$I=\mathrm{span}\langle z_1+\alpha x_0, x_1,x_2\rangle$, $\alpha\in \mathbb{C}$ or 
\item[iii)]$I=\mathrm{span}\langle z_0+\alpha z_1+\beta x_2, 2x_0+\alpha x_1,x_1+2\alpha x_2\rangle$, $\alpha, \beta \in \mathbb{C}$.
\end{itemize}
\end{itemize}

Moreover, the algebras in the previous list that admit a nontrivial Levi extension are those given in item a).
\end{cor}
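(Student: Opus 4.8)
The plan is to establish the two assertions of the statement separately: first the classification list in items a)--c), then the characterization of which algebras admit a nontrivial Levi extension. The backbone of the classification is Proposition \ref{DerivationN2}, which guarantees that every $t$-nilpotent Lie algebra of type $2$ is a homomorphic image $\mathfrak{n}_{2,t}/I$. Since the type equals $\dim\mathfrak{n}-\dim\mathfrak{n}^2$, keeping the type equal to $2$ forces $I\subseteq \mathfrak{n}_{2,t}^2$, and keeping the nilindex equal to $t$ forces $\mathfrak{m}^t\not\subseteq I$; moreover, by the universal property of $\mathfrak{n}_{2,t}$, two such quotients are isomorphic exactly when the ideals are related by an element of $\mathrm{Aut}\,\mathfrak{n}_{2,t}$. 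Thus the whole classification reduces to listing the orbits of admissible ideals $I$ under $\mathrm{Aut}\,\mathfrak{n}_{2,t}$ for $t\le 4$, organized by the grading $\mathfrak{n}_{2,t}=\mathfrak{m}\oplus\mathfrak{m}^2\oplus\cdots\oplus\mathfrak{m}^t$ and its $\mathrm{Der}_1^0\,\mathfrak{n}_{2,t}$-module structure from Proposition \ref{low free nilpotent}.

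For $t=1,2$ this is immediate, since $\mathfrak{n}_{2,1}^2=0$ and the only admissible ideal of $\mathfrak{n}_{2,2}$ is $0$ (its derived algebra $\mathfrak{m}^2$ is the one-dimensional top layer, which cannot be quotiented without lowering the nilindex), giving only the free algebras of item a). For $t=3$ I would observe that $\mathfrak{m}^3=V(1)$ is central and that an admissible $I$ must lie inside $\mathfrak{m}^3$: a nonzero projection onto $\mathfrak{m}^2=\mathbb{C}w_0$ would, through $[\mathfrak{m},w_0]=\mathfrak{m}^3$, force $\mathfrak{m}^3\subseteq I$ and collapse the nilindex; the admissible ideals are then $0$ and the lines of $\mathfrak{m}^3$, yielding items a) and b). For $t=4$ — the principal and most laborious case, where algebraic closure intervenes — I would run the same layer-by-layer bracket analysis using $\mathfrak{m}^4=V(2)$, $\mathfrak{m}^3=V(1)$, $\mathfrak{m}^2=V(0)$: ideals inside the central top layer $\mathfrak{m}^4$ give c-i), whereas ideals with nonzero projection onto $\mathfrak{m}^3$ or $\mathfrak{m}^2$ are forced by closure under $[\mathfrak{m},-]$ to contain prescribed subspaces of $\mathfrak{m}^4$, and reducing the resulting parametrized families under the automorphism group produces c-ii) and c-iii). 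This orbit reduction is the step I expect to be the main obstacle, and it is precisely what \cite[Proposition 1.5]{Ga73} and \cite[Theorem 3.5]{BeCo13} are invoked to supply.

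For the final assertion the key reduction is that $\mathfrak{n}_{2,t}/I$ admits a nontrivial Levi extension if and only if $I$ is stable under the Levi subalgebra $\mathrm{Der}_1^0\,\mathfrak{n}_{2,t}\cong\mathfrak{sl}_2(\mathbb{C})$. Indeed, by Theorem \ref{Malcev44} a nontrivial Levi extension provides a nonzero semisimple algebra of derivations of $\mathfrak{n}_{2,t}/I$; since a derivation is determined by its values on the generating space $\mathfrak{m}$, this algebra embeds into $\mathfrak{gl}(\mathfrak{m})=\mathfrak{gl}_2$, so it is a copy of $\mathfrak{sl}_2(\mathbb{C})$ acting on $\mathfrak{m}$ by the unique faithful two-dimensional representation $V(1)$. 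Such an action pulls back to the standard $\mathrm{Der}_1^0$-action on the free algebra $\mathfrak{n}_{2,t}$ and descends to the quotient exactly when $I$ is an $\mathfrak{sl}_2(\mathbb{C})$-submodule. For item a) ($I=0$) this holds and yields the extension $\mathfrak{g}_{2,t}$ of Proposition \ref{low free nilpotent}. For items b) and c) it fails: the layers $\mathfrak{m}^3=V(1)$ and $\mathfrak{m}^4=V(2)$ are irreducible, so the lines of b) and the lines and planes of c-i) are never submodules, while the mixed generators of c-ii) and c-iii), such as $z_1+\alpha x_0$ or $z_0+\alpha z_1+\beta x_2$, are carried by $e$ or $f$ outside $I$ (for instance $[e,z_1+\alpha x_0]=z_0\notin I$ and $[f,z_0+\alpha z_1+\beta x_2]=z_1\notin I$). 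Hence no nonzero admissible $I$ is $\mathfrak{sl}_2(\mathbb{C})$-stable, so only the free algebras of item a) admit a nontrivial Levi extension.
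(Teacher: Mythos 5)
Your argument is correct and follows essentially the same route as the paper: the paper reduces via \cite[Proposition 1.5]{Ga73} to ideals $I\subseteq\mathfrak{n}_{2,t}^2$ with $\mathfrak{n}_{2,t}^t\not\subseteq I$ and declares the case analysis for $t\le 4$ a straightforward computation, while the Levi-extension criterion is delegated to \cite[Theorem 3.5]{BeCo13}; you simply carry out that computation layer by layer and sketch the cited criterion instead of invoking it. The only point deserving one more sentence is the claim that a semisimple algebra of derivations of $\mathfrak{n}_{2,t}/I$ embeds into $\mathfrak{gl}(\mathfrak{m})$: this needs the projection $\mathrm{Der}(\mathfrak{n}_{2,t}/I)\to\mathfrak{gl}\bigl((\mathfrak{n}_{2,t}/I)/(\mathfrak{n}_{2,t}/I)^2\bigr)$ together with the observation that its kernel consists of nilpotent derivations and so meets a semisimple subalgebra trivially by Engel's theorem, but this is exactly the standard argument behind the cited result and does not affect the validity of your proof.
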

\begin{proof}From \cite[Proposition 1.5]{Ga73}, these algebras are of the form $\displaystyle\frac{\mathfrak{n}_{2,t}}{I}$ where the ideal $I$ is contained in $\mathfrak{n}_{2,t}^2$ and such that $\mathfrak{n}_{2,t}^t\not\subseteq I$. Now,  the result for $t\leq 4$ is a straightforward computation. 
\end{proof}
\begin{rmk} Corollary \ref{classification} provides the classification of filiform Lie algebras of nilindex $\leq 4$. Apart from $\mathfrak{n}_{2,1}$ and $\mathfrak{n}_{2,2}$ the algebras in this class are given in items \emph{b)}, \emph{c)-ii)}, \emph{c)-iii)}.  By considering $2$-subspaces $I$,  item  \emph{b)-i)}  and $\mathfrak{n}_{2,3}$ gives  us all quasifiliform algebras of type 2 and nilindex $\leq 4$.
\end{rmk}

\section{Solvable extensions of $\mathfrak{n}_{2,t}$}

Now we study the solvable Lie algebras $\mathfrak{r}$ having a $2$-free nilpotent algebra $\mathfrak{n}_{2,t}$ as nilradical. The basic idea is to get $\mathfrak{r}$ by extending $\mathfrak{n}_{2,t}$ through commuting vector spaces of derivations of the Lie algebra $\mathrm{Der}_1\, \mathfrak{n}_{2,t}= \mathrm{Der}_1^0\, \mathfrak{n}_{2,t}\oplus \mathbb{C}\cdot id_{2,t}$ described in (\ref{der1}) which is isomorphic to $\mathfrak{gl}_2(\mathbb{C})$. In this section we will show that, the codimension $\mathfrak{n}_{2,t}$ on its extended solvable radical $\mathfrak{r}$ is at most $2$.


\begin{lem}\label{solvExt}Let $\mathfrak{r}$ be a complex solvable Lie algebra with nilradical $\mathfrak{n}_{2,t}$. Then, $\mathfrak{r}=\mathfrak{n}_{2,t}\oplus \mathfrak{t}$ where $\mathfrak{t}$ is a complex vector space equidimensional to the projection $\mathfrak{p}_1(\mathfrak{t})=\mathrm{proj}_{\mathrm{Der}_1\, \mathfrak{n}_{2,t}}\mathrm{ad}_{\mathfrak{n}_{2,t}}\mathfrak{t}$ which is an abelian subalgebra of $\mathrm{Der}_1\, \mathfrak{n}_{2,t}$ of dimension at most $2$ and the derivation set $\mathrm{ad}_{\mathfrak{n}_{2,t}}\mathfrak{t}$ contains no nilpotent elements. Moreover, in case $\mathfrak{t}=\mathbb{C}\cdot x+\mathbb{C}\cdot y$, $\mathfrak{p}_1(\mathfrak{t})=C_{\mathrm{Der}_1\, \mathfrak{n}_{2,t}}(\delta)$ for some $\delta \in \mathrm{Der}_1\, \mathfrak{n}_{2,t}\backslash \mathbb{C}\cdot id_{2,t}$ and the braket derivation $[\mathrm{ad}_{\mathfrak{n}_{2,t}}x,\mathrm{ad}_{\mathfrak{n}_{2,t}}y]$ is an inner derivation of $\mathfrak{n}_{2,t}$.
\end{lem}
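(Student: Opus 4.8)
The plan is to pull everything back to the graded Lie algebra $\mathrm{Der}\,\mathfrak{n}_{2,t}=\mathrm{Der}_1\,\mathfrak{n}_{2,t}\oplus\mathfrak{N}_{2,t}$ and to exploit three facts already recorded in Section~2: that $\mathfrak{N}_{2,t}$ consists of nilpotent maps, that the multiplication rule $[\mathrm{Der}_i,\mathrm{Der}_k]\subseteq\mathrm{Der}_{i+k-1}$ makes $\mathfrak{N}_{2,t}$ an ideal with $\mathrm{Der}\,\mathfrak{n}_{2,t}/\mathfrak{N}_{2,t}\cong\mathrm{Der}_1\,\mathfrak{n}_{2,t}\cong\mathfrak{gl}_2(\mathbb{C})$, and that the grading $\mathfrak{n}_{2,t}=\mathfrak{m}\oplus\cdots\oplus\mathfrak{m}^t$ forces every inner derivation to raise degree, hence to lie in $\mathfrak{N}_{2,t}$. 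First I would note that, $\mathfrak{r}$ being solvable, its derived algebra is a nilpotent ideal, so $[\mathfrak{r},\mathfrak{r}]\subseteq\mathfrak{n}_{2,t}$; thus $\mathfrak{r}/\mathfrak{n}_{2,t}$ is abelian and I may fix a vector space complement $\mathfrak{t}$ with $\mathfrak{r}=\mathfrak{n}_{2,t}\oplus\mathfrak{t}$. Each $x\in\mathfrak{t}$ produces the derivation $\mathrm{ad}_{\mathfrak{n}_{2,t}}x$, the restriction to $\mathfrak{n}_{2,t}$ of $\mathrm{ad}_{\mathfrak{r}}x$.

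Since $\mathfrak{p}_1$ is the quotient map modulo the ideal $\mathfrak{N}_{2,t}$, it is a Lie homomorphism, and from $[\mathfrak{t},\mathfrak{t}]\subseteq[\mathfrak{r},\mathfrak{r}]\subseteq\mathfrak{n}_{2,t}$ one gets $[\mathrm{ad}_{\mathfrak{n}_{2,t}}x,\mathrm{ad}_{\mathfrak{n}_{2,t}}y]=\mathrm{ad}_{\mathfrak{n}_{2,t}}[x,y]$ with $[x,y]\in\mathfrak{n}_{2,t}$. This already settles the last assertion, namely that the bracket $[\mathrm{ad}_{\mathfrak{n}_{2,t}}x,\mathrm{ad}_{\mathfrak{n}_{2,t}}y]$ is an inner derivation of $\mathfrak{n}_{2,t}$; moreover, that inner derivation lies in $\mathfrak{N}_{2,t}$, so $\mathfrak{p}_1$ kills it and $\mathfrak{p}_1(\mathfrak{t})=\mathfrak{p}_1(\mathrm{ad}_{\mathfrak{n}_{2,t}}\mathfrak{t})$ is an \emph{abelian} subalgebra of $\mathrm{Der}_1\,\mathfrak{n}_{2,t}\cong\mathfrak{gl}_2(\mathbb{C})$.

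The hard part will be the equidimensionality together with the absence of nonzero nilpotent elements in $\mathrm{ad}_{\mathfrak{n}_{2,t}}\mathfrak{t}$. I would first isolate a nilpotency criterion: every $D\in\mathrm{Der}\,\mathfrak{n}_{2,t}$ preserves the descending filtration $F_k=\mathfrak{m}^k\oplus\cdots\oplus\mathfrak{m}^t$ and induces on the associated graded exactly its degree-preserving part $\mathfrak{p}_1(D)=\widehat{\delta}$; hence $D$ is nilpotent if and only if $\delta\in\mathfrak{gl}(\mathfrak{m})$ is nilpotent (a nilpotent map has nilpotent induced maps on each graded piece, while conversely a nilpotent $\delta$ makes some power of $D$ strictly increase the filtration). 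Suppose, for contradiction, that some nonzero $x\in\mathfrak{t}$ had $\mathrm{ad}_{\mathfrak{n}_{2,t}}x$ nilpotent. Then $\mathfrak{n}_{2,t}\oplus\mathbb{C}x$ is an ideal of $\mathfrak{r}$ (because $[\mathfrak{r},x]\subseteq\mathfrak{n}_{2,t}$), and for every $n+\alpha x$ the operator $\mathrm{ad}(n+\alpha x)$ sends this ideal into $\mathfrak{n}_{2,t}$ and restricts there to a derivation whose $\mathfrak{p}_1$-part is $\alpha\,\mathfrak{p}_1(\mathrm{ad}_{\mathfrak{n}_{2,t}}x)$ (the inner contribution $\mathrm{ad}\,n$ adds nothing to $\mathfrak{p}_1$), hence nilpotent by the criterion; Engel's theorem then makes $\mathfrak{n}_{2,t}\oplus\mathbb{C}x$ a nilpotent ideal strictly larger than $\mathfrak{n}_{2,t}$, contradicting maximality of the nilradical. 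Therefore $\mathfrak{p}_1(\mathrm{ad}_{\mathfrak{n}_{2,t}}x)\neq0$ for all $x\neq0$, giving both the injectivity of $\mathfrak{p}_1\circ\mathrm{ad}_{\mathfrak{n}_{2,t}}$ on $\mathfrak{t}$ (so $\dim\mathfrak{t}=\dim\mathfrak{p}_1(\mathfrak{t})$) and the non-nilpotency statement.

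Finally I would close with elementary $\mathfrak{gl}_2(\mathbb{C})$ linear algebra. An abelian subalgebra of $\mathfrak{gl}_2(\mathbb{C})$ has dimension at most $2$, whence $\dim\mathfrak{t}=\dim\mathfrak{p}_1(\mathfrak{t})\leq2$. In the case $\mathfrak{t}=\mathbb{C}x+\mathbb{C}y$ with $\dim\mathfrak{p}_1(\mathfrak{t})=2$, the subalgebra $\mathfrak{p}_1(\mathfrak{t})$ cannot be contained in the line $\mathbb{C}\cdot id_{2,t}$, so it contains a non-central element $\delta$; as the centralizer in $\mathfrak{gl}_2(\mathbb{C})$ of any non-central element is exactly $2$-dimensional and contains the abelian set $\mathfrak{p}_1(\mathfrak{t})$, a dimension count yields $\mathfrak{p}_1(\mathfrak{t})=C_{\mathrm{Der}_1\,\mathfrak{n}_{2,t}}(\delta)$ with $\delta\in\mathrm{Der}_1\,\mathfrak{n}_{2,t}\setminus\mathbb{C}\cdot id_{2,t}$. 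I expect the filtration/Engel step of the previous paragraph to be the only genuinely delicate point; the abelianness and the centralizer description are then routine.
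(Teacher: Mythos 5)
Your proposal is correct and follows essentially the same route as the paper's proof: project $\mathrm{ad}_{\mathfrak{n}_{2,t}}\mathfrak{t}$ onto $\mathrm{Der}_1\,\mathfrak{n}_{2,t}\cong\mathfrak{gl}_2(\mathbb{C})$, observe that the kernel of $\pi_0\circ\mathrm{ad}_{\mathfrak{n}_{2,t}}$ is exactly $\mathfrak{n}_{2,t}$ so the image is abelian and equidimensional to $\mathfrak{t}$, and bound its dimension by the $2$-dimensional centralizer of a non-scalar element. The only difference is that you actually justify, via the filtration criterion for nilpotency and Engel's theorem, the non-nilpotency of $\mathrm{ad}_{\mathfrak{n}_{2,t}}x$ for $0\neq x\in\mathfrak{t}$ (equivalently $\mathrm{Ker}\,\pi_0\circ\mathrm{ad}_{\mathfrak{n}_{2,t}}=\mathfrak{n}_{2,t}$), a step the paper merely asserts from the maximality of the nilradical; this is a welcome addition, not a divergence.
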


\begin{proof} Let $\mathfrak{t}$ be any arbitrary complement of  $\mathfrak{n}_{2,t}$ in $\mathfrak{r}$. From (\ref{R2inN}), we have the relation $\mathfrak{t}^2\subseteq \mathfrak{r}^2\subseteq\mathfrak{n}_{2,t}$. Note that, since the nilradical of $\mathfrak{m}$ is $\mathfrak{n}_{2,t}$, the vector linear space $ad_{\mathfrak{n}_{2,t}}\mathfrak{t}$ is a subspace of derivations in $\mathfrak{n}_{2,t}$ with  no nilpotent linear maps. Consider now the adjoint representation $\mathrm{ad}_{\mathfrak{n}_{2,t}}:\mathfrak{r}\to \mathrm{Der}\,\mathfrak{n}_{2,t}$ and the projection homomorphism $\pi_0: \mathrm{Der}\,\mathfrak{n}_{2,t}\to \mathrm{Der}_1\,\mathfrak{n}_{2,t}$. Then, $\mathrm{Ker}\, \pi_0\circ \mathrm{ad}_{\mathfrak{n}_{2,t}}=\{a\in \mathfrak{r} : \mathrm{ad}_{\mathfrak{n}_{2,t}}\,a \in \mathfrak{N}_{2,t}\}=\mathfrak{n}_{2,t}$. Thus the quotient $\displaystyle\frac{\mathfrak{r}}{\mathfrak{n}_{2,t}}$ is an abelian Lie algebra equidimensional to $\mathfrak{t}$ and isomorphic to $\mathrm{Im}\, \pi_0\circ \mathrm{ad}_{\mathfrak{n}_{2,t}}$. Since $\mathrm{Der}_1\,\mathfrak{n}_{2,t}\cong \mathfrak{gl}_2(\mathbb{C})$, up to isomorphisms, $\mathrm{Im}\, \pi_0\circ \mathrm{ad}_{\mathfrak{n}_{2,t}}$ is a set of commutative linear transformations over a $2$-dimensional vector space. For any $\alpha\cdot  \mathrm{id}_\mathfrak{m}\neq f\in \mathrm{Im}\, \pi_0\circ \mathrm{ad}_{\mathfrak{n}_{2,t}}$, the centralizer of $f$, $C_{\mathfrak{gl}_2(\mathbb{C})}(f)$ is a $2$-dimensional subspace which contains $\mathrm{Im}\, \pi_0\circ \mathrm{ad}_{\mathfrak{n}_{2,t}}$. Note that for $2$-dimensional extensions, $\mathrm{Im}\, \pi_0\circ \mathrm{ad}_{\mathfrak{n}_{2,t}}$ is just a centralizer. The final assertion follows from the fact $\mathrm{ad}_{\mathfrak{n}_{2,t}}[x,y]=[\mathrm{ad}_{\mathfrak{n}_{2,t}}x,\mathrm{ad}_{\mathfrak{n}_{2,t}}y]$ and $[x,y] \in \mathfrak{n}_{2,t}$.
\end{proof}

\subsection{Derivations and automorphisms}

From Proposition \ref{DerivationN2}, the set of derivations of any Lie algebra $\mathfrak{n}_{2,t}$ is completely determined by $Hom(\mathfrak{m},\mathfrak{m})$ and $Hom(\mathfrak{m},\mathfrak{n}_{d,t}^2)=\oplus_{2\leq j\leq t}Hom(\mathfrak{m},\mathfrak{m}^j)$. Moreover, any automorphism appears as extension of a map of the general linear group $GL(\mathfrak{m})$ according to \cite[Proposition 3]{Sa70}. Along this section, the derivation algebras and automorphisms groups of free nilpotent Lie algebras will be represented through matrices $(\alpha_{ij})$ relative to the basis given in Proposition \ref{low free nilpotent} ($\mathfrak{m}=\mathbb{C}\cdot v_0\oplus \mathbb{C}\cdot v_1$, only nonzero products are displayed): 

\begin{itemize}

\item [$\mathfrak{n}_{2,1}:$] Abelian $2$-dimensional. 
\item []$\mathrm{Der}\, \mathfrak{n}_{2,1}=\mathfrak{gl}_2(\mathbb{C})=\mathrm{Der}_1\mathfrak{n}_{2,1}:$$$D_{\mathbf{u}}^\beta=\left(
\begin{array}{cc}
\alpha_1+\beta& \alpha_2 \\
 \alpha_3& -\alpha_1+\beta \\

\end{array}\right),\  {\mathbf{u}}=(\alpha_1,\alpha_2,\alpha_3);$$
\item[]extended derivation of the identity map $I_2$ is $I_{2,1}=I_2$;
\item []$\mathfrak{N}_{2,1}=\mathrm{Inner}\, \mathfrak{n}_{2,1}=0$;
\item []$\mathrm{Aut}\, \mathfrak{n}_{2,1}=GL_2(\mathbb{C}):$$$\Phi_{\mathbf{v}}=\left(
\begin{array}{cc}
\alpha_1& \alpha_2 \\
 \alpha_3& \alpha_4 \\

\end{array}\right), \ {\mathbf{v}}=(\alpha_1,\dots,\alpha_4) \ \mathrm{and}\ \epsilon=\alpha_1\alpha_4-\alpha_2\alpha_3\neq 0.$$
\item[]
\item [$\mathfrak{n}_{2,2}:$] Heisenberg $3$-dimensional, $[v_0,v_1]=w_0.$
\item []$\mathrm{Der}\, \mathfrak{n}_{2,2}:$ $D_{\mathbf{u}}^\beta=\left(
\begin{array}{ccc}
\alpha_1+\beta& \alpha_2 & 0  \\
 \alpha_3& -\alpha_1+\beta& 0 \\
\alpha_4& \alpha_5& 2\beta \\
\end{array}\right)$, \ ${\mathbf{u}}=(\alpha_1,\dots,\alpha_5)$;
\item[]
\item []$\mathrm{Der}_1\, \mathfrak{n}_{2,2}:$ $D_{(\alpha_1,\alpha_2,\alpha_3,0,0)}^\beta=\left(
\begin{array}{ccc}
\alpha_1+\beta& \alpha_2 & 0  \\
 \alpha_3& -\alpha_1+\beta& 0 \\
0& 0& 2\beta \\
\end{array}\right);$
\item[]extended derivation of $I_2$ is the $3\times 3$ matrix $I_{2,2}=D_\mathbf{0}^1;$
\item[]
\item []$\mathfrak{N}_{2,2}=\mathrm{Inner}\, \mathfrak{n}_{2,2}:$ $D_{(0,0,0,\alpha_4,\alpha_5)}^0=\left(
\begin{array}{ccc}
0& 0 & 0  \\
 0 & 0& 0  \\
 \alpha_4& \alpha_5 &  0 \\
\end{array}\right).$
\item[]
\item []$\mathrm{Aut}\, \mathfrak{n}_{2,2}:$ $ \Phi_{\mathbf{v}}=\left(
\begin{array}{ccc}
 \alpha_1& \alpha_2 & 0  \\
 \alpha_3& \alpha_4& 0 \\
\alpha_5& \alpha_6&   \epsilon \\
\end{array}\right), {\mathbf{v}}=(\alpha_1,\dots,\alpha_5).$
\item[]
\item [$\mathfrak{n}_{2,3}:$]denoted as $\mathcal{L}_{3,5}$ in \cite{An11}, $[v_0,v_1]=w_0, [v_i,w_0]=z_i, i=0,1.$
\item []$\mathrm{Der}\, \mathfrak{n}_{2,3}:${\small $$D_{\mathbf{u}}^ \beta=\left(
\begin{array}{ccccc}
 \alpha_1+\beta& \alpha_2 & 0 & 0 & 0 \\
 \alpha_3& -\alpha_1+\beta& 0 & 0 & 0 \\
\alpha_4& \alpha_5& 2\beta& 0 & 0 \\
 \alpha_6& \alpha_7 & \alpha_5 & \alpha_1+3\beta & \alpha_2\\
 \alpha_8& \alpha_9 & -\alpha_4& \alpha_3 & -\alpha_1+3\beta \\
\end{array}\right), {\mathbf{u}}=(\alpha_1,\dots,\alpha_9);$$}
\item []$\mathrm{Der}_1\mathfrak{n}_{2,3}:${\small $$D_{(\alpha_1,\alpha_2,\alpha_3,0,0,0,0,0,0)}^ \beta=\left(
\begin{array}{ccccc}
 \alpha_1+\beta& \alpha_2 & 0 & 0 & 0 \\
 \alpha_3& -\alpha_1+\beta& 0 & 0 & 0 \\
0& 0& 2\beta& 0 & 0 \\
0& 0& 0 & \alpha_1+3\beta & \alpha_2\\
0& 0 & 0& \alpha_3 & -\alpha_1+3\beta \\
\end{array}\right);$$}
\item[]extended derivation of $I_2$ is the $5\times 5$ matrix $I_{2,3}=D_\mathbf{0}^1;$
\item []$\mathfrak{N}_{2,3}:$ $D_{(0,0,0,\alpha_4,\dots,\alpha_9)}^ 0=\left(
\begin{array}{ccccc}
0& 0 & 0 & 0 & 0 \\
0& 0& 0 & 0 & 0 \\
\alpha_4& \alpha_5& 0& 0 & 0 \\
 \alpha_6& \alpha_7 & \alpha_5 &0&0\\
 \alpha_8& \alpha_9 & -\alpha_4& 0& 0 \\
\end{array}\right);$
\item []$\mathrm{Inner}\, \mathfrak{n}_{2,3}:$ $D_{(0,0,0,\alpha_4,\alpha_5, \alpha_6,0,0,0)}^0=\left(
\begin{array}{ccccc}
0& 0 & 0&0&0  \\
 0 & 0& 0&0&0  \\
 \alpha_4& \alpha_5&  0&0&0 \\
 \alpha_6& 0 & \alpha_5&0 &0  \\
 0& \alpha_6 & -\alpha_4& 0&0  \\
\end{array}\right);$

\item []$\mathrm{Aut}\, \mathfrak{n}_{2,3}:$ ${\small \Phi_{\mathbf{v}}=\left(
\begin{array}{ccccc}
 \alpha_1& \alpha_2 & 0 &0&0  \\
 \alpha_3& \alpha_4 & 0 &0&0  \\
 \alpha_5& \alpha_6 &  \epsilon &0&0 \\
\alpha_7& \alpha_8&\alpha_1\alpha_6-\alpha_2\alpha_5 &\epsilon \alpha_1 &\epsilon \alpha_2  \\
\alpha_9& \alpha_{10} & \alpha_3\alpha_6 -\alpha_4\alpha_5&\epsilon \alpha_3&\epsilon \alpha_4  \\

\end{array}\right), {\mathbf{v}}=(\alpha_1,\dots,\alpha_{10})}$.



\end{itemize}

\subsection{Solvable $1$-extensions of $\mathfrak{n}_{2,t}$ for $t=1,2,3$}
\bigskip

Solvable $1$-extensions $\mathfrak{t}=k\cdot x$ of $\mathfrak{n}_{2,t}$ follows from no nilpotent derivations $\mathrm{ad}_{\mathfrak{n}_{2,t}} x=D+O$ where $D\in \mathrm{Der}_1\mathfrak{n}_{2,t}$ and $O\in \mathfrak{N}_{2,t}$. Moreover, the isomorphism problem between two different one dimensional extensions is solved in an easy way:  $\mathfrak{n}_{2,t}\oplus k\cdot x \cong \mathfrak{n}_{2,t}\oplus k\cdot x'$ if and only if there exists a $\Phi \in \mathrm{Aut}\, \mathfrak{n}_{2,t}$ such that $\Phi \cdot(D+O)\cdot \Phi^{-1}=\mathrm{ad}\, a+\alpha\cdot (D'+O')$ for some $a\in \mathfrak{n}_{2,t}$, $\alpha\in \mathbb{C}$. Since $\Phi \cdot\mathrm{Inner}\, \mathfrak{n}_{2,t}\cdot \Phi^{-1}=\mathrm{Inner}\, \mathfrak{n}_{2,t}$, we can consider w.l.o.g. extensions by derivations with zero projection on $\mathrm{Inner}\, \mathfrak{n}_{2,t}$.

\subsubsection{ $1$-extensions in case $\mathfrak{n}_{2,1}:$ }

\noindent From $\mathrm{Inner}\, \mathfrak{n}_{2,1}=0$, we have that solvable $1$-extensions are given by Jordan forms of $2\times 2$ matrices. So, up to isomorphisms, we can extend by one of the following derivations:

\begin{enumerate}
\item  $D_{(0,0,1)}^1=\left(\begin{array}{cc}
 1 & 0   \\
 1 & 1  \\
 \end{array}
\right).$
\item $D_{(\frac{1-\alpha}{2},0,0)}^{\frac{1+\alpha}{2}}=\left(\begin{array}{cc}
 1 & 0   \\
 0 & \alpha  \\
 \end{array}
\right)$, $\alpha\in \mathbb{C};$
\end{enumerate} 

\subsubsection{ $1$-extensions in case $\mathfrak{n}_{2,2}:$} 

\noindent Since $\mathrm{Inner}\,\mathfrak{n}_{2,2}=\mathfrak{N}_{2,2}$, solvable $1$-extensions are also given by Jordan forms of $2\times 2$ matrices. As in the previous case, we have two possibilities:
\begin{enumerate}
\item $D_{(0,0,1,0,0)}^1=\left(
\begin{array}{ccc}
 1 & 0 & 0  \\
 1 & 1 & 0  \\
 0&0& 2 \\
\end{array}
\right).$
\item $D_{(\frac{1-\alpha}{2},0,0,0,0)}^{\frac{1+\alpha}{2}}=\left(
\begin{array}{ccc}
 1 & 0 & 0  \\
 0 & \alpha & 0  \\
0 &0 & 1+\alpha \\
\end{array}
\right)$, $\alpha\in \mathbb{C};$

\end{enumerate}

\subsubsection{ $1$-extensions in case $\mathfrak{n}_{2,3}:$ }
\smallskip

\noindent Note that derivations with no projection on $\mathrm{Inner}\, \mathfrak{n}_{2,3}$ are of the form:{\small$$\left(
\begin{array}{ccc}
A & \mathbf{0}_{2,1} & \mathbf{0}_{2,2}  \\
\mathbf{0}_{1,2}& \mathrm{tr}\, A & \mathbf{0}_{1,2}  \\
 M&\mathbf{0}_{2,1}& A+2\beta I_2 \\
\end{array}
\right)=\left(
\begin{array}{ccc}
A & \mathbf{0}_{2,1} & \mathbf{0}_{2,2}  \\
\mathbf{0}_{1,2}& \mathrm{tr}\, A & \mathbf{0}_{1,2}  \\
\mathbf{0}_{2,2}&\mathbf{0}_{2,1}& A+2\beta I_2 \\
\end{array}
\right)+\left(
\begin{array}{ccc}
 \mathbf{0}_{2,2}&  \mathbf{0}_{2,1} &  \mathbf{0}_{2,2}  \\
 \mathbf{0}_{1,2}& \mathbf{0}_{1,1} &  \mathbf{0}_{1,2}  \\
 M& \mathbf{0}_{2,1}& \mathbf{0}_{2,2} \\
\end{array}
\right),$$}$\mathbf{0}_{m,n}$ zero matrix $m\times n$, $A$ and $M={\small \left(
\begin{array}{cc}
\alpha_6 & \alpha_7  \\
 \alpha_8& 0  \\
\end{array}
\right)}$ $2\times 2$ matrices. According to (\ref{gradder}), the second summand in previous matrix decomposition corresponds to a derivation $\delta: \mathfrak{n}_{2,3}\to Z(\mathfrak{n}_{2,3})=\mathfrak{n}_{2,3}^3$ which belongs to $\mathrm{Der}_3\mathfrak{n}_{2,3}$, so $\delta$ has the matrix general form:$$\left(
\begin{array}{ccc}
\mathbf{0}_{2,2} & \mathbf{0}_{2,1} & \mathbf{0}_{2,2}  \\
\mathbf{0}_{1,2}& \mathbf{0}_{1,1} & \mathbf{0}_{1,2}  \\
 {\small \begin{array}{cc}
 \alpha_6 & \alpha_7   \\
 \alpha_8 & \alpha_9  \\
 \end{array}}
&\mathbf{0}_{2,1}& \mathbf{0}_{2,2}\\
\end{array}
\right)
$$We note that, $\mathrm{Der}_3\mathfrak{n}_{2,3}$ is an abelian subalgebra which is invariant by conjugation. This subalgebra  contains a $1$-dimensional subspace of inner derivations. Then, up to isomorphisms, the extensions we are looking for are given by one of the following type of derivations:

\begin{enumerate}
\item[(a)] The derivation matrix $D_{\mathbf{u}}^1$ where $\mathbf{u}=(0,0,1,0,0,\alpha_6,\alpha_7,\alpha_8,0)$, i.e.:$$D_{\mathbf{u}}^1=\left(
\begin{array}{ccccc}
 1 & 0 & 0 & 0 & 0 \\
 1 & 1 & 0 & 0 & 0 \\
 0 & 0 & 2 & 0 & 0 \\
 \alpha_6 & \alpha_7 & 0 & 3 & 0 \\
 \alpha_8 & 0 & 0 & 1 & 3 \\
\end{array}
\right).$$Denoting ${\mathbf{v}}=(1,0,0,1,0,0,\frac{1}{4} (2\alpha_6+\alpha_7), \frac{\alpha_7}{2},  -\frac{\alpha_6}{4}-\frac{\alpha_7}{4}+\frac{\alpha_8}{2},-\frac{\alpha_7}{4})$, the automorphism $\Phi_{\mathbf{v}}$ transforms previous derivation by conjugation into:$$\Phi_{\mathbf{v}}^{-1}\cdot D_{\mathbf{u}}^1\cdot \Phi_{\mathbf{v}}=D_{\mathbf{u}_1}^1$$ where $\mathbf{u}_1=(0,0,1,0,0,0,0,0,0)$. Then, we get:$$D_{\mathbf{u}_1}^1=\left(
\begin{array}{ccccc}
 1 & 0 & 0 & 0 & 0 \\
 1 & 1 & 0 & 0 & 0 \\
 0 & 0 & 2 & 0 & 0 \\
 0 & 0 & 0 & 3 & 0 \\
 0 & 0 & 0 & 1 & 3 \\
\end{array}
\right).$$
\item[(b)] The derivation matrix $D_\mathbf{u}^{\frac{1+\alpha}{2}}$ with $\mathbf{u}=(\frac{1-\alpha}{2},0,0,0,0,\alpha_6,\alpha_7,\alpha_8,0)$, i.e.:$$D_\mathbf{u}^{\frac{1+\alpha}{2}}=\left(
\begin{array}{ccccc}
 1 & 0 & 0 & 0 & 0 \\
  0& \alpha & 0 & 0 & 0 \\
 0 & 0 & 1+\alpha & 0 & 0 \\
 \alpha_6 & \alpha_7 & 0 & 2+\alpha & 0 \\
 \alpha_8 & 0 & 0 & 0& 1+2\alpha \\
\end{array}
\right).$$Now  three subcases can be considered:
\smallskip

\begin{enumerate}
\item[](b.1) $\alpha=-1$: By conjugation through the automorphism $\Phi_\mathbf{v}$, with $\mathbf{v}=(1,0,0,1,0,0,0, \frac{\alpha_7}{2}, \frac{\alpha_8}{2},0)$, the derivation $D_\mathbf{u}^{\frac{1+\alpha}{2}}=D_\mathbf{u}^0$ becomes into:$$\Phi_\mathbf{v}^{-1}\cdot D_\mathbf{u}^0\cdot \Phi_\mathbf{v}=D_{\mathbf{w}}^0,$$ where $\mathbf{w}=(1,0,0,0,0,\alpha_6,0,0,0)$. If $\alpha_6\neq 0$, by using $\Phi_{\mathbf{v'}}$ with $\mathbf{v'}=(\frac{1}{\sqrt{\alpha_6}},0,0,\frac{1}{\sqrt{\alpha_6}},0,0,0, 0, 0,0)$, we get:$$\Phi_{\mathbf{v'}}^{-1}\cdot D_{\mathbf{w}}^0\cdot \Phi_{\mathbf{v'}}=D_{\mathbf{u}_3}^0,$$$\mathbf{u}_3=(1,0,0,0,0,1,0,0,0)$. So, up to isomorphisms, two new possible derivations appear ($\mathbf{u}_2=(1,0,0,0,0,0,0,0,0)$ is related to $\alpha_6=0$):$${\small
D_{\mathbf{u}_2}^0=\left(
\begin{array}{ccccc}
 1 & 0 & 0 & 0 & 0 \\
 0 & -1 & 0 & 0 & 0 \\
 0 & 0 & 0 & 0 & 0 \\
 0 & 0 & 0 & 1 & 0 \\
 0 & 0 & 0 & 0 & -1 \\
\end{array}
\right)\ \mathrm{and}\ 
D_{\mathbf{u}_3}^0=\left(
\begin{array}{ccccc}
 1 & 0 & 0 & 0 & 0 \\
 0 & -1 & 0 & 0 & 0 \\
 0 & 0 & 0 & 0 & 0 \\
 1 & 0 & 0 & 1 & 0 \\
 0 & 0 & 0 & 0 & -1 \\
\end{array}
\right).}
$$
\item[](b.2) $\alpha=0$: Using $\mathbf{v}=(1,0,0,1,0,0,\alpha_6, \frac{\alpha_7}{2},0,0)$, the automorphism $\Phi_\mathbf{v}$, transforms $D_\mathbf{u}^{\frac{1+\alpha}{2}}=D_\mathbf{u}^{\frac{1}{2}}$ into:$$\Phi_\mathbf{v}^{-1}\cdot D_\mathbf{u}^{\frac{1}{2}}\cdot \Phi_\mathbf{v}=D_\mathbf{w}^{\frac{1}{2}},$$ where $\mathbf{w}=(\frac{1}{2},0,0,0,0,0,0,\alpha_8,0)$. Now, in case $\alpha_8\neq 0$, taking $\mathbf{v'}=(\frac{1}{\sqrt{\alpha_8}},0,0,\frac{1}{\sqrt{\alpha_8}},0,0,0, 0, 0,0)$ we have:$$\Phi_\mathbf{v'}^{-1}\cdot D_\mathbf{w}^{\frac{1}{2}}\cdot \Phi_\mathbf{v'}=D_{\mathbf{u}_5}^{\frac{1}{2}},$$$\mathbf{u}_5=(\frac{1}{2},0,0,0,0,0,0,1,0)$. So, up to isomorphisms, two additional derivations appear ($\mathbf{u}_4=(\frac{1}{2},0,0,0,0,0,0,0,0)$ is related to $\alpha_8=0$):
$${\small
D_{\mathbf{u}_4}^{\frac{1}{2}}=\left(
\begin{array}{ccccc}
 1 & 0 & 0 & 0 & 0 \\
 0 & 0 & 0 & 0 & 0 \\
 0 & 0 & 1 & 0 & 0 \\
 0 & 0 & 0 & 2 & 0 \\
 0 & 0 & 0 & 0 & 1 \\
\end{array}
\right)\ \mathrm{and}\ 
D_{\mathbf{u}_5}^{\frac{1}{2}}=\left(
\begin{array}{ccccc}
 1 & 0 & 0 & 0 & 0 \\
 0 & 0 & 0 & 0 & 0 \\
 0 & 0 & 1 & 0 & 0 \\
 0 & 0 & 0 & 2 & 0 \\
 1 & 0 & 0 & 0 & 1 \\
\end{array}
\right).}
$$

\item[](b.3) $\alpha\neq 0,-1$: Taking $\mathbf{v}=(1,0,0,1,0,0,\frac{\alpha_6}{1+\alpha}, \frac{\alpha_7}{2}, \frac{\alpha_8}{2\alpha},0)$, we have:$$\Phi_\mathbf{v}^{-1}\cdot D_\mathbf{u}^{\frac{1+\alpha}{2}}\cdot \Phi_\mathbf{v}=D_{\mathbf{u}_6}^{\frac{1+\alpha}{2}},$$$\mathbf{u}_6=(\frac{1-\alpha}{2},0,0,0,0,0,0,0,0)$. So, up to isomorphisms, we get the derivation:$$D_{\mathbf{u}_6}^{\frac{1+\alpha}{2}}=\left(
\begin{array}{ccccc}
 1 & 0 & 0 & 0 & 0 \\
 0 & \alpha & 0 & 0 & 0 \\
 0 & 0 & 1+\alpha & 0 & 0 \\
 0 & 0 & 0 & 2+\alpha & 0 \\
 0 & 0 & 0 & 0 & 1+2\alpha \\
\end{array}
\right).$$
\end{enumerate}

\end{enumerate}
\emph{Summarizing:} $1$-extensions of $\mathfrak{n}_{2,3}$ are given by one of the following derivations:
\begin{enumerate}
\item $D_{\mathbf{u}_1}^1$ where $\mathbf{u}_1=(0,0,1,0,0,0,0,0,0)$;
\item $D_{\mathbf{u}_3}^0$, where $\mathbf{u}_3=(1,0,0,0,0,1,0,0,0)$;
\item $D_{\mathbf{u}_5}^\frac{1}{2}$, where $\mathbf{u}_5=(\frac{1}{2},0,0,0,0,0,0,1,0)$;
\item $D_{\mathbf{u}_6}^\frac{1+\alpha}{2}$, where $\mathbf{u}_6=(\frac{1-\alpha}{2},0,0,0,0,0,0,0,0)$ and $\alpha \in \mathbb{C}$.
\end{enumerate}

\subsection{Solvable $2$-extensions}
\bigskip

Solvable $2$-extensions $\mathfrak{t}=\mathbb{C}\cdot x\oplus \mathbb{C}\cdot y$ are related to $2$-dimensional centralizer subalgebras:
\begin{equation}\label{central}C_{{Der}_1\mathfrak{n}_{2,t}}(D)=\mathbb{C}\cdot I_{2,t}\oplus \mathbb{C}\cdot D,
\end{equation}inside $\mathrm{Der}_1\mathfrak{n}_{2,t}$. According to Lemma \ref{solvExt}, $\mathrm{ad}_{\mathfrak{n}_{2,t}}x=I_{2,t}+O_1$ and $\mathrm{ad}_{\mathfrak{n}_{2,t}}y=D+O_2$ where $O_i \in \mathfrak{N}_{2,t}$; in fact, we can assume $O_i$ have no projection on $\mathrm{Inner}\, \mathfrak{n}_{2,t}$. Then, we can get a $2$-extension in case the set $\mathrm{span}\langle I_{2,t}+O_1,D+O_2\rangle$ has no nilpotent derivations and:
\begin{equation}\label{innercondition}[I_{2,t}+O_1,D+O_2]=[I_{2,t},O_2]+[O_1,D]+[O_1,O_2]\in  \mathrm{Inner}\, \mathfrak{n}_{2,t}. 
\end{equation}

\subsubsection{ $2$-extensions in case $\mathfrak{n}_{2,1}:$ }
\smallskip

\noindent  Since $\mathfrak{N}_{2,1}=0$, solvable extensions are just given by $\mathcal{C}=C_{{Der}_1\mathfrak{n}_{2,1}}(D)$, where $D$ is a  $2\times 2$ matrix ($I_{2,1}=I_2$). If $D$ is a non-semisimple (non nilpotent) matrix, up to isomorphisms and rescaling if necessary, we can assume that  $D=D_{(0,0,1)}^1$; but then, $D-I_2\in \mathcal{C}$ is a nilpotent derivation, which is not possible. Hence, $D$ is a semisimple matrix with two different eigenvalues, so we can assume w.l.o.g. $D=D_{(\frac{1-\alpha}{2},0,0)}^{\frac{1+\alpha}{2}}$, $\alpha\neq 1$ and therefore:$$C_{\mathfrak{n}_{2,1}}(D)=\mathbb{C}\cdot I_2 \oplus\mathbb{C}\cdot \left(\begin{array}{cc}
 1 & 0   \\
 0 & -1  \\
 \end{array}
\right).$$Now, the extension $\mathfrak{t}$ is given by the derivations $\mathrm{ad}_{\mathfrak{n}_{2,1}}x=I_{2,1}=I_2$ and $\mathrm{ad}_{\mathfrak{n}_{2,1}}y=D_{(1,0,0)}^0$. Note that, $\mathfrak{t}^2=\mathbb{C}\cdot [x,y]\subseteq Z(\mathfrak{n}_{2,1})=\mathfrak{n}_{2,1}$. Thus, we can consider the (unique) extension $\mathfrak{n}_{2,1}\oplus \mathfrak{t}_1$ with $\mathfrak{t}_1=\mathbb{C}\cdot x \oplus \mathbb{C}\cdot(y-[x,y])$ an abelian subalgebra ($x$ acts as the identity on $\mathfrak{n}_{2,1}$).

\subsubsection{ $2$-extensions in case $\mathfrak{n}_{2,2}:\quad$ }
\smallskip

\noindent In this case, $I_{2,2}=D_\mathbf{0}^1$ and $\mathfrak{N}_{2,2}=\mathrm{Inner}\, \mathfrak{n}_{2,2}$. From the last condition, solvable extensions are also given by $\mathcal{C}=C_{{Der}_1\mathfrak{n}_{2,2}}(D)$, $D=D_{(\alpha_1,\alpha_2,\alpha_3,0,0)}^\beta$. As in the previous case, $D$ must be semisimple. Otherwise, we can assume $D=D_{(0,0,1,0,0)}^1$, but then we get that $D-I_{2,2}\in \mathcal{C}$ is a nilpotent derivation. Hence, up to isomorphisms,  we can assume $D=D_{(\frac{1-\alpha}{2},0,0,0,0)}^{\frac{1+\alpha}{2}}$ with $\alpha\neq 1$ and therefore:$${\mathfrak{n}_{2,1}}(D)=\mathbb{C}\cdot I_{2,2} \oplus\mathbb{C}\cdot \left(\begin{array}{ccc}
 1 & 0 &0   \\
 0 & -1 &0  \\
  0 & 0&0  \\
 \end{array}
\right)
$$Now, the extension $\mathfrak{t}$ is given by the derivations $\mathrm{ad}_{\mathfrak{n}_{2,2}}x=I_{2,2}$ and $\mathrm{ad}_{\mathfrak{n}_{2,2}}y=D_{(1,0,0,0,0)}^0$. Since the extension follows from an abelian subalgebra of derivations, we must observe that in the extended algebra $\mathfrak{n}_{2,2}\oplus \mathfrak{t}$, $[x,y]\in Z(\mathfrak{n}_{2,2})=\mathbb{C}\cdot w_0$. Then, we can consider that the (unique) extension is given by the abelian subalgebra $\mathfrak{t}_1=\mathbb{C}\cdot x \oplus \mathbb{C}\cdot(y-\frac{1}{2}[x,y])$ (in this case, $x$ acts as $2\cdot id$ on $\mathbb{C}\cdot w_0$.)

\subsubsection{ $2$-extensions in case $\mathfrak{n}_{2,3}:\quad$ }
\smallskip

\noindent We look for  centralizer subalgebras $\mathcal{C}=C_{Der_1\mathfrak{n}_{2,3}}(D)$, $D=D_\mathbf{u}^\beta$ where $\mathbf{u}=(\alpha_1,\alpha_2,\alpha_3,0,0,0,0,0,0)$. Since derivations inside $\mathfrak{N}_{2,3}$ without projection on $\mathrm{Inner}\, \mathfrak{n}_{2,3}$ belongs to  $Der_3\mathfrak{n}_{2,3}$, which is a subspace invariant by conjugation through automorphisms,  and taking in account that $\mathrm{span}\langle I_{2,t}+O_1,D+O_2\rangle$ has no nilpotent derivations, in a similar vein as in previous cases, we get that $I_{2,t}+O_1=D_\mathbf{u}^1$ and $D+O_2=D_\mathbf{w}^1$ where $\mathbf{u}=(0,0,0,0,0,\alpha_6,\alpha_7,\alpha_8,0)$ and $\mathbf{w}=(1,0,0,0,0,\beta_6,\beta_7,\beta_8,0;0)$, i.e.:$$
D_\mathbf{u}^1=\left(
\begin{array}{ccccc}
 1 & 0 & 0 & 0 & 0 \\
 0 & 1 & 0 & 0 & 0 \\
 0 & 0 & 2 & 0 & 0 \\
 \alpha_6 & \alpha_7  & 0 & 3 & 0 \\
 \alpha_8 & 0 & 0 & 0 & 3 \\
\end{array}
\right)\ \mathrm{and}\ 
D_\mathbf{w}^1=\left(
\begin{array}{ccccc}
 1 & 0 & 0 & 0 & 0 \\
 0 & -1 & 0 & 0 & 0 \\
 0 & 0 & 0 & 0 & 0 \\
 \beta_6 & \beta_7 & 0 & 1 & 0 \\
 \beta_8 & 0  & 0 & 0 & -1 \\
\end{array}
\right).
$$Now,$$[D_\mathbf{u}^1,D_\mathbf{w}^1]=\left(
\begin{array}{ccccc}
 0 & 0 & 0 & 0 & 0 \\
 0 & 0 & 0 & 0 & 0 \\
 0 & 0 & 0 & 0 & 0 \\
 2\beta_6 & 2(\beta_7-\alpha_7)  & 0 & 0 & 0 \\
 2(\alpha_8+\beta_8)  & 0 & 0 & 0 & 0 \\
\end{array}
\right)
$$is an inner derivation if and only if $\beta_6=0$, $\beta_7=\alpha_7$ and $\beta_8=-\alpha_8$. Considering the automorphism: $\Phi_{\mathbf{v}}$ where $\mathbf{v}=(1,0,0,1,0,0,-\frac{\alpha_6}{2}, -\frac{\alpha_7}{2}, -\frac{\alpha_8}{2},0)$, by conjugation, we get the derivations:$$\Phi_\mathbf{v}^{-1}\cdot D_\mathbf{u}^1\cdot \Phi_\mathbf{v}=\left(
\begin{array}{ccccc}
 1 & 0 & 0 & 0 & 0 \\
 0 & 1 & 0 & 0 & 0 \\
 0 & 0 & 2 & 0 & 0 \\
 0 & 0  & 0 & 3 & 0 \\
0 & 0 & 0 & 0 & 3 \\
\end{array}
\right)=I_{2,3}\ \mathrm{and}$$
$$\Phi_\mathbf{v}^{-1}\cdot D_\mathbf{w}^1\cdot \Phi_\mathbf{v}=\left(
\begin{array}{ccccc}
 1 & 0 & 0 & 0 & 0 \\
 0 & -1 & 0 & 0 & 0 \\
 0 & 0 & 0 & 0 & 0 \\
 0 & 0 & 0 & 1 & 0 \\
 0 & 0  & 0 & 0 & -1 \\
\end{array}
\right)=D_{\mathbf{u}_1}^0,$$where $\mathbf{u}_1=(1,0,0,0,0,0,0,0,0)$. Hence, up to isomorphisms, there is a unique way to extend through derivations which is given by declaring  $\mathrm{ad}_{\mathfrak{n}_{2,2}}x=I_{2,3}$ and $\mathrm{ad}_{\mathfrak{n}_{2,2}}y=D_{\mathbf{u}_1}^0$. Finally, we must observe that in the corresponding extension $\mathfrak{n}_{2,3}\oplus \mathfrak{t}$, $[x,y]\in Z(\mathfrak{n}_{2,3})=\mathbb{C}\cdot z_0\oplus \mathbb{C}\cdot z_1$. Since $x$ acts as $3\cdot id$ on $Z(\mathfrak{n}_{2,3})$, we can consider the (unique) extension $\mathfrak{n}_{2,3}\oplus \mathfrak{t}_1$ where $\mathfrak{t}_1=\mathrm{span}\langle x, y-\frac{1}{3}[x,y]\rangle$ is an abelian subalgebra.

\section{Non-solvable extensions of $\mathfrak{n}_{2,t}$}

From Theorem \ref{free nilpotent} and  \cite[Theorem 2.1 and 2.2]{Tu92} (see also \cite[Proposition 2.2]{BeCo13}), the faithful nonsolvable Lie algebras with nilradical $\mathfrak{n}_{2,t}$ are of the form, \begin{equation}\label{nonsolvex}
\mathfrak{g}=\mathfrak{g}_{2,t}\oplus \mathfrak{t}_0
\end{equation}
where $\mathfrak{g}_{2,t}=\mathrm{Der}_1^0\,\mathfrak{n}_{2,t}\oplus_{id}\mathfrak{n}_{2,t}$ is as described in Theorem \ref{free nilpotent}, $\mathfrak{r}=\mathfrak{n}_{2,t}\oplus \mathfrak{t}_0$ is the solvable radical of $\mathfrak{g}$ and $\mathfrak{t}_0$ is a $\mathrm{Der}_1^0\,\mathfrak{n}_{2,t}$-trivial module with no ad-nilpotent elements. So, in the Lie algebra $\mathfrak{g}$, we have $[\mathrm{Der}_1^0\,\mathfrak{n}_{2,t},\mathfrak{t}_0]=0$. Denote by $(\mathfrak{n}_{2,t})_0$ the sum of all trivial modules in $\mathfrak{n}_{2,t}$ and note that this vector space is a subalgebra of $\mathfrak{n}_{2,t}$. Since $\mathfrak{t}_0^2 \subseteq (\mathfrak{n}_{2,t})_0$, the direct sum:
 \begin{equation}\label{zeromod}
\mathfrak{r}_0=\mathfrak{t}_0 \oplus(\mathfrak{n}_{2,t})_0
\end{equation}
\noindent is a subalgebra of $\mathfrak{r}$ that contains the (solvable) subalgebra generated through $\mathfrak{t}_0$. But from the Jacobi identity in the Lie algebra (\ref{nonsolvex}),  we have $[\mathrm{Der}_1^0\,\mathfrak{n}_{2,t},\mathrm{ad}_{\mathfrak{n}_{2,t}}\mathfrak{t}_0]=0$, so the set of derivations $\mathrm{ad}_{\mathfrak{n}_{2,t}}\mathfrak{t}_0$ centralizes $\mathrm{Der}_1^0\,\mathfrak{n}_{2,t}$ in $\mathrm{Der}\, \mathfrak{n}_{2,t}$. All these basic ideas yield to the following result:

\begin{lem}\label{nonsolvext}Let $\mathfrak{g}$ be a nonsolvable Lie algebra with nilradical $\mathfrak{n}_{2,t}$. Then $\mathfrak{g}$ is one of the following Lie algebras:
\begin{itemize}
\item [a)] $\mathfrak{g}=\mathfrak{s}\oplus \mathfrak{r}$ is a direct sum as ideals of a semisimple algebra $\mathfrak{s}$ and a solvable Lie algebra $\mathfrak{r}$ with nilradical $\mathfrak{n}_{2,t}$;
\item [b)] $\mathfrak{g}=\mathfrak{s}\oplus\mathfrak{g}_{2,t}$, a direct sum as ideals where $\mathfrak{s}$ is either the null algebra or any arbitrary semisimple algebra or
\item [c)] $\mathfrak{g}=\mathfrak{s}\oplus \mathfrak{g}(\delta)$, a direct sum as ideals where $\mathfrak{s}$ is either the null algebra or any arbitrary semisimple algebra and $\mathfrak{g}(\delta)$ is the Lie algebra $\mathfrak{g}(\delta)=\mathfrak{g}_{2,t}\oplus \mathbb{C}\cdot x$, with $[\mathrm{Der}_1^0\,\mathfrak{n}_{2,t},x]=0$ and $ad_{\mathfrak{n}_{2,t}}x=id_{2,t}+\delta$,  $\delta \in\mathfrak{N}_{2,t}$ where $\delta\in C_{\mathrm{Der}\,\mathfrak{n}_{2,t}}(\mathrm{Der}_1^0\,\mathfrak{n}_{2,t})$.
\end{itemize}
\end{lem}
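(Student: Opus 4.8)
The plan is to peel off the part of the Levi subalgebra that acts trivially on $\mathfrak{n}_{2,t}$, reducing everything to the faithful case already described in (\ref{nonsolvex}), and then to bound the dimension of the trivial complement $\mathfrak{t}_0$ by a projection argument.

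First I would use the decomposition recorded in the Introduction: any Lie algebra with radical $\mathfrak{r}$ splits as a direct sum of ideals $\mathfrak{g}=\mathfrak{s}\oplus\mathfrak{g}'$, where $\mathfrak{s}$ is semisimple and $\mathfrak{g}'$ is faithful with the same radical $\mathfrak{r}$. Since the sum is as ideals, the nilradical of $\mathfrak{g}'$ is still $\mathfrak{n}_{2,t}$. If $\mathfrak{g}'$ is solvable, then $\mathfrak{g}'=\mathfrak{r}$ and we land in case a). Otherwise $\mathfrak{g}'$ is faithful and nonsolvable, so (\ref{nonsolvex}) yields $\mathfrak{g}'=\mathfrak{g}_{2,t}\oplus\mathfrak{t}_0$ with $\mathfrak{t}_0$ a trivial $\mathrm{Der}_1^0\,\mathfrak{n}_{2,t}$-module containing no ad-nilpotent elements, whose adjoint image $\mathrm{ad}_{\mathfrak{n}_{2,t}}\mathfrak{t}_0$ centralizes $\mathrm{Der}_1^0\,\mathfrak{n}_{2,t}$ inside $\mathrm{Der}\,\mathfrak{n}_{2,t}$, as noted just before the statement.

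The core step is to prove $\dim\mathfrak{t}_0\le 1$. I would reuse the projection $\pi_0:\mathrm{Der}\,\mathfrak{n}_{2,t}\to\mathrm{Der}_1\,\mathfrak{n}_{2,t}$ from Lemma \ref{solvExt}, a Lie algebra homomorphism with kernel $\mathfrak{N}_{2,t}$. Being a homomorphism, $\pi_0$ carries $C_{\mathrm{Der}\,\mathfrak{n}_{2,t}}(\mathrm{Der}_1^0\,\mathfrak{n}_{2,t})$ into the centralizer of $\mathrm{Der}_1^0\,\mathfrak{n}_{2,t}$ in $\mathrm{Der}_1\,\mathfrak{n}_{2,t}=\mathrm{Der}_1^0\,\mathfrak{n}_{2,t}\oplus\mathbb{C}\cdot id_{2,t}$ (see (\ref{der1})), which, since $\mathrm{Der}_1^0\,\mathfrak{n}_{2,t}\cong\mathfrak{sl}_2(\mathbb{C})$ is simple, is exactly the line $\mathbb{C}\cdot id_{2,t}$. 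Hence $\pi_0\circ\mathrm{ad}_{\mathfrak{n}_{2,t}}$ maps $\mathfrak{t}_0$ into $\mathbb{C}\cdot id_{2,t}$, and this restriction is injective: if $x\in\mathfrak{t}_0$ lies in its kernel, then $\mathrm{ad}_{\mathfrak{n}_{2,t}}x\in\mathfrak{N}_{2,t}$ is a nilpotent map, so $x$ is ad-nilpotent and thus $x=0$. An injective linear map into a one-dimensional space forces $\dim\mathfrak{t}_0\le 1$.

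It then remains to read off the cases. If $\mathfrak{t}_0=0$ we obtain b). If $\dim\mathfrak{t}_0=1$, write $\mathfrak{t}_0=\mathbb{C}\cdot x$; the step above gives $\pi_0(\mathrm{ad}_{\mathfrak{n}_{2,t}}x)=c\cdot id_{2,t}$ with $c\ne 0$, so after rescaling $x$ we may assume $\mathrm{ad}_{\mathfrak{n}_{2,t}}x=id_{2,t}+\delta$ with $\delta\in\mathfrak{N}_{2,t}$. Since both the whole derivation and $id_{2,t}$ centralize $\mathrm{Der}_1^0\,\mathfrak{n}_{2,t}$, so does $\delta$, i.e. $\delta\in C_{\mathrm{Der}\,\mathfrak{n}_{2,t}}(\mathrm{Der}_1^0\,\mathfrak{n}_{2,t})$, and together with the relation $[\mathrm{Der}_1^0\,\mathfrak{n}_{2,t},x]=0$ recorded before the statement this is exactly case c). The main obstacle I anticipate is not the bookkeeping of the first and last steps but the dimension bound: the full centralizer $C_{\mathrm{Der}\,\mathfrak{n}_{2,t}}(\mathrm{Der}_1^0\,\mathfrak{n}_{2,t})$ can be high-dimensional (it contains one trivial summand $V(0)$ for each copy of $V(1)$ occurring in the homogeneous layers $\mathfrak{m}^j$), so the absence of ad-nilpotent elements must be combined with the fact that the $\mathrm{Der}_1$-projection lands in the single line $\mathbb{C}\cdot id_{2,t}$ in order to collapse $\mathfrak{t}_0$ to dimension at most one.
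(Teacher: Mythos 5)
Your proposal is correct and follows essentially the same route as the paper: reduce to the faithful case $\mathfrak{g}_{2,t}\oplus\mathfrak{t}_0$, observe that $\mathrm{ad}_{\mathfrak{n}_{2,t}}\mathfrak{t}_0$ lies in $C_{\mathrm{Der}\,\mathfrak{n}_{2,t}}(\mathrm{Der}_1^0\,\mathfrak{n}_{2,t})\subseteq\mathbb{C}\cdot id_{2,t}\oplus\mathfrak{N}_{2,t}$, and use the absence of ad-nilpotent elements in $\mathfrak{t}_0$ to force $\dim\mathfrak{t}_0\le 1$. Your phrasing of the dimension bound (injectivity of $\pi_0\circ\mathrm{ad}_{\mathfrak{n}_{2,t}}$ into the line $\mathbb{C}\cdot id_{2,t}$) is just a cleaner repackaging of the paper's argument that two normalized elements $x,y\in\mathfrak{t}_0$ with $\mathrm{ad}\,x-\mathrm{ad}\,y\in\mathfrak{N}_{2,t}$ must coincide.
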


\begin{proof}  Let $\mathfrak{g}=\mathfrak{s}\oplus \mathfrak{r}$ a Levi decomposition of $\mathfrak{g}$. In case $[\mathfrak{s},\mathfrak{r}]=0$ or $\mathfrak{t}_0= 0$ we get items $a)$ and $b)$. Otherwise, we have $\mathfrak{t}_0\neq0$ and $[\mathfrak{s},\mathfrak{n}_{2,t}]\neq 0$ and, from Theorem \ref{free nilpotent} and preliminary comments in this section, $\mathfrak{g}$ decomposes as a direct sum of ideals, $\mathfrak{g}=\mathfrak{s}_1\oplus (\mathrm{Der}_1^0\,\mathfrak{n}_{2,t}\oplus_{id}\mathfrak{n}_{2,t}\oplus \mathfrak{t}_0)$, $\mathfrak{s}_1$ being either semisimple or $\mathfrak{s}_1=0$. Let $\mathfrak{r}_0$ the subalgebra defined in (\ref{zeromod}) and consider the (restricted) adjoint representation $\mathrm{ad}_{\mathfrak{n}_{2,t}}: \mathfrak{r}_0\to \mathrm{Der}\, \mathfrak{n}_{2,t}$, which is both an homomorphism of Lie algebras and a $\mathrm{Der}_1^0\,\mathfrak{n}_{2,t}$-module homomorphism. Note that $\mathrm{Ker}\, \mathrm{ad}_{\mathfrak{n}_{2,t}}=C_{\mathfrak{r}_0}(\mathfrak{n}_{2,t})=\mathcal{Z}(\mathfrak{n}_{2,t})_0$, i.e. the trivial module living inside the center of the nilradical $\mathfrak{n}_{2,t}$. Decompose $(\mathfrak{n}_{2,t})_0$ as a direct sum of subspaces, $(\mathfrak{n}_{2,t})_0=\mathcal{Z}(\mathfrak{n}_{2,t})_0\oplus \mathfrak{v}_0$. The quotient Lie algebra $\displaystyle\frac{\mathfrak{r}_0}{\mathcal{Z}(\mathfrak{n}_{2,t})_0}\ (\cong \mathfrak{v}_0\oplus \mathfrak{t}_0$) is isomorphic to $\mathrm{ad}_{\mathfrak{n}_{2,t}}(\mathfrak{r}_0)$, a subalgebra of $\mathrm{Der}\, \mathfrak{n}_{2,t}$ contained in $C_{\mathrm{Der}\,\mathfrak{n}_{2,t}}(\mathrm{Der}_1^0\,\mathfrak{n}_{2,t})=k\cdot id_{2,t}\oplus (\mathfrak{N}_{2,t})_0$, the last sumand is the sum of trivial $\mathrm{Der}_1^0\,\mathfrak{n}_{2,t}$-modules inside $\mathfrak{N}_{2,t}$ via the adjoint representation $[\delta,\mu]=\delta\mu-\mu\delta$. But $\mathrm{ad}_{\mathfrak{n}_{2,t}^2}\mathfrak{t}_0$ has no nilpotent derivations (the nilradical of $\mathfrak{g}$ is just $\mathfrak{n}_{2,t}$), so none of the elements inside $\mathrm{ad}_{\mathfrak{n}_{2,t}}(\mathfrak{t}_0)$ belongs to $\mathfrak{N}_{2,t}$. Let $x,y\in \mathfrak{t}_0$ be two nonzero elements. Rescaling, we can assume $ad\,x=id_{2,t} \oplus \delta$ and $ad\,y=id_{2,t} \oplus \mu$, where $\delta, \mu\in (\mathfrak{N}_{2,t})_0$. Then $ad\, (x-y)\in (\mathfrak{N}_{2,t})_0$ and therefore, $x-y$ is a nilpotent element in $\mathfrak{t}_0$; the only possibility is $x=y$ and part c) follows. It is immediate to check that items a), b) and c) provides Lie algebras.
\end{proof}

For free nilpotent of low index, the centralizer $C_{\mathrm{Der}\,\mathfrak{n}_{2,t}}(\mathrm{Der}_1^0\,\mathfrak{n}_{2,t})$ is easy to be computed. In fact, we have:
\begin{itemize}
\item $t=1,2$: $C_{\mathrm{Der}\,\mathfrak{n}_{2,t}}(\mathrm{Der}_1^0\,\mathfrak{n}_{2,t})=\mathbb{C}\cdot id_{2,t}$;
\item $t=3,4$: $C_{\mathrm{Der}\,\mathfrak{n}_{2,t}}(\mathrm{Der}_1^0\,\mathfrak{n}_{2,t})=\mathbb{C}\cdot id_{2,t}\oplus \mathbb{C}\cdot  \mathrm{ad}\, \omega_0$, where $\omega_0=[v_0,v_1]$ according to Proposition \ref{low free nilpotent}.
\end{itemize}
 The above centralizer computation and Lemma  \ref{nonsolvext} allow us to stablish:

\begin{thm} For $t=1,2,3,4$ and up to isomorphisms, the faithful nonsolvable complex Lie algebras with nilradical $\mathfrak{n}_{2,t}$  are $\mathfrak{g}_{2,t}=\mathrm{Der}_1^0\,\mathfrak{n}_{2,t}\oplus_{id}\mathfrak{n}_{2,t}$ described in Theorem \ref{free nilpotent} and those of the form $\mathfrak{g}=\mathfrak{g}_{2,t}\oplus k\cdot x$ where $ad_{\mathfrak{g}}\, x$ acs trivially on $\mathrm{Der}_1^0\,\mathfrak{n}_{2,t}$ and as the $id_{2,t}$ on $\mathfrak{n}_{2,t}$. 
\end{thm}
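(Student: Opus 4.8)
The plan is to read off the faithful nonsolvable algebras directly from the trichotomy of Lemma~\ref{nonsolvext} together with the centralizer computation recorded just above the theorem, and then to normalize the single remaining free parameter by absorbing an inner derivation. First I would invoke Lemma~\ref{nonsolvext}, which places any nonsolvable $\mathfrak{g}$ with nilradical $\mathfrak{n}_{2,t}$ into one of the forms a), b), c). Imposing faithfulness, i.e.\ the absence of nonzero semisimple ideals, eliminates case a) outright: there the nonsolvability of $\mathfrak{g}$ forces the semisimple summand $\mathfrak{s}$ to be a nonzero ideal acting trivially on the radical, contradicting faithfulness. In cases b) and c) the same condition forces $\mathfrak{s}=0$, so the faithful representatives are exactly $\mathfrak{g}=\mathfrak{g}_{2,t}$ and $\mathfrak{g}=\mathfrak{g}(\delta)$ with $\delta\in\mathfrak{N}_{2,t}\cap C_{\mathrm{Der}\,\mathfrak{n}_{2,t}}(\mathrm{Der}_1^0\,\mathfrak{n}_{2,t})$.

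Next I would substitute the stated centralizer values. For $t=1,2$ one has $C_{\mathrm{Der}\,\mathfrak{n}_{2,t}}(\mathrm{Der}_1^0\,\mathfrak{n}_{2,t})=\mathbb{C}\cdot id_{2,t}$, and since $id_{2,t}\in\mathrm{Der}_1\,\mathfrak{n}_{2,t}$ lies outside $\mathfrak{N}_{2,t}=\bigoplus_{j\geq 2}\mathrm{Der}_j\,\mathfrak{n}_{2,t}$, the admissible $\delta$ is forced to be $0$; thus $\mathfrak{g}(\delta)$ is already the asserted extension, in which $x$ acts as $id_{2,t}$ on $\mathfrak{n}_{2,t}$ and trivially on the Levi factor. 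For $t=3,4$ the centralizer equals $\mathbb{C}\cdot id_{2,t}\oplus\mathbb{C}\cdot\mathrm{ad}\,\omega_0$, so the surviving possibilities are $\delta=\lambda\,\mathrm{ad}\,\omega_0$ with $\lambda\in\mathbb{C}$.

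The crux is then to show that every such $\mathfrak{g}(\lambda\,\mathrm{ad}\,\omega_0)$ is isomorphic to the canonical one. Here I would use that $\omega_0=[v_0,v_1]=w_0$ lies in $\mathfrak{n}_{2,t}$ and spans a trivial $\mathrm{Der}_1^0\,\mathfrak{n}_{2,t}$-module (by Proposition~\ref{low free nilpotent}, $\mathfrak{m}^2=V(0)$), so $\mathrm{ad}\,\omega_0$ is an inner derivation commuting with $\mathrm{Der}_1^0\,\mathfrak{n}_{2,t}$. Replacing $x$ by $x'=x-\lambda w_0$ leaves the underlying space $\mathfrak{g}_{2,t}\oplus\mathbb{C}\cdot x=\mathfrak{g}_{2,t}\oplus\mathbb{C}\cdot x'$ unchanged (since $w_0\in\mathfrak{n}_{2,t}\subseteq\mathfrak{g}_{2,t}$) and yields $\mathrm{ad}_{\mathfrak{n}_{2,t}}x'=id_{2,t}+\lambda\,\mathrm{ad}\,w_0-\lambda\,\mathrm{ad}\,w_0=id_{2,t}$, while $[\mathrm{Der}_1^0\,\mathfrak{n}_{2,t},x']=0$ is preserved because $[\mathrm{Der}_1^0\,\mathfrak{n}_{2,t},w_0]=0$. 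This reduces $\mathfrak{g}(\delta)$ to the canonical extension, and I would close by noting that in it $x$ acts by the non-nilpotent map $id_{2,t}$, so the nilradical remains $\mathfrak{n}_{2,t}$ and $\mathfrak{g}$ is genuinely faithful and nonsolvable.

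I expect the main obstacle to be precisely this normalization step for $t=3,4$: one must verify both that subtracting the inner derivation exactly cancels $\delta$ on $\mathfrak{n}_{2,t}$ and that the centralizing condition $[\mathrm{Der}_1^0\,\mathfrak{n}_{2,t},x']=0$ survives the change of generator. The remaining cases follow formally from Lemma~\ref{nonsolvext} and the centralizer data, so no further computation is needed.
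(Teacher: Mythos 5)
Your proposal is correct and follows essentially the same route as the paper: reduce to the cases of Lemma~\ref{nonsolvext}, plug in the centralizer computation $C_{\mathrm{Der}\,\mathfrak{n}_{2,t}}(\mathrm{Der}_1^0\,\mathfrak{n}_{2,t})$ for $t\leq 4$, and discard the $\mathrm{ad}\,\omega_0$ component because it is inner. Your explicit normalization $x\mapsto x-\lambda w_0$, together with the check that $w_0$ spans a trivial $\mathrm{Der}_1^0\,\mathfrak{n}_{2,t}$-module so the centralizing condition is preserved, is exactly the content of the paper's terse remark that the inner derivation ``can be removed.''
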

\begin{proof}
The result follows trivially from Lemma \ref{nonsolvext} in cases $t=1,2$, and for $t=2,3$ we note that  the elements inside $C_{\mathrm{Der}\,\mathfrak{n}_{2,t}}(\mathrm{Der}_1^0\,\mathfrak{n}_{2,t})$ are of the form $\alpha\cdot id_{2,t}\oplus \beta\cdot  \mathrm{ad}\, \omega_0$. But  $\mathrm{ad}\, \omega_0$ is an inner derivation of $\mathfrak{n}_{2,t}$ and therefore it can be removed.
\end{proof}


\section{Conclusion: Lie algebras with nilradical $\mathfrak{n}_{2,t}$ for $t=1,2,3$}

From previous results and comments, the classification of Lie algebras with nilradical $\mathfrak{n}_{2,t}$, boils down to the determination of suitable derivation sets containing no nilpotent elements as described in Lemmas \ref{solvExt} and \ref{nonsolvext}. The sets of derivations has been completely determined for $t=1,2,3$ in section 3 (solvable extensions) by using explicit matrix descriptions of derivations, inner derivations and automorphisms of $\mathfrak{n}_{2,t}$ and in section 4 (nonsolvable extensions) by computing centralizers inside $\mathrm{Der}\, \mathfrak{n}_{2,t}$. In this final section we summarized the results in a unique theorem, where the whole list of Lie algebras with nilradical $\mathfrak{n}_{2,1}, \mathfrak{n}_{2,2}$ or $\mathfrak{n}_{2,3}$ is given through their multiplication tables and the nested basis descriptions obtained in Proposition \ref{low free nilpotent}:

\begin{thm}\label{conclusion} Up tp isomorphisms, the Lie algebras with nilradical a free nilpotent Lie algebra of type 2 and nilindex $t\leq 3$ are (only nonzero products involving elements not in the nilradical are given):
\begin{itemize}

\item[i)] The abelian 2-dimensional $\mathfrak{n}_{2,1}$;
\item[ii)] $\mathfrak{r}_{2,1}^{1}=\mathfrak{n}_{2,1}\oplus \mathbb{C}\cdot x:\quad$ $[x,v_0]=v_0+v_1, [x,v_1]=v_1$;
\item[iii)]$\mathfrak{r}_{2,1}^{1,\alpha}=\mathfrak{n}_{2,1}\oplus \mathbb{C}\cdot x:\quad$$[x,v_0]=v_0$ and $[x,v_1]=\alpha v_1$, $\alpha \in \mathbb{C}$;
\item[iv)] $\mathfrak{r}_{2,1}^{2}=\mathfrak{n}_{2,1}\oplus \mathbb{C}\cdot x\oplus \mathbb{C}\cdot y:\quad$ $[x,v_0]=[y,v_0]=v_0, [x,v_1]=-[y,v_1]=v_1$;
\item[v)]$\mathfrak{g}_{2,1}=\mathfrak{sl}_2(\mathbb{C})\oplus\mathfrak{n}_{2,1}:\quad$$[e,f]=h$, $[h,e]=2e$, $[h,f]=-2f$, $[h,v_0]=v_0$, $[h,v_1]=-v_1$, $[e,v_1]=v_0$, $[f,v_0]=v_1$;
\item[vi)]$\mathfrak{g}_{2,1}^1=\mathfrak{sl}_2(\mathbb{C})\oplus\mathfrak{r}_{2,1}^{1,1}:\quad$$[e,f]=h$, $[h,e]=2e$, $[h,f]=-2f$, $[h,v_0]=v_0$, $[h,v_1]=-v_1$, $[e,v_1]=v_0$, $[f,v_0]=v_1$, $[x,v_0]=v_0$, $[x,v_1]=v_1$;
\item[vii)]The Heisenberg 3-dimensional $\mathfrak{n}_{2,2}$: $[v_0,v_1]=w_0$,
\item[viii)]$\mathfrak{r}_{2,2}^{1}=\mathfrak{n}_{2,2}\oplus \mathbb{C}\cdot x:\quad$ $[x,v_0]=v_0+v_1, [x,v_1]=v_1$, $[x,w_0]=2w_0$;
\item[ix)]$\mathfrak{r}_{2,2}^{1,\alpha}=\mathfrak{n}_{2,2}\oplus \mathbb{C}\cdot x:\quad$$[x,v_0]=v_0$, $[x,v_1]=\alpha v_1$, $[x,w_0]=(1+\alpha) w_0$, $\alpha \in \mathbb{C}$;
\item[x)] $\mathfrak{r}_{2,2}^{2}=\mathfrak{n}_{2,2}\oplus \mathbb{C}\cdot x\oplus \mathbb{C}\cdot y:\quad$$[x,v_0]=[y,v_0]=v_0$, $[x,v_1]=-[y,v_1]=v_1$, $[x,w_0]=2w_0$;
\item[xi)]$\mathfrak{g}_{2,2}=\mathfrak{sl}_2(\mathbb{C})\oplus\mathfrak{n}_{2,2}:\quad$$[e,f]=h$, $[h,e]=2e$, $[h,f]=-2f$, $[h,v_0]=v_0$, $[h,v_1]=-v_1$, $[e,v_1]=v_0$, $[f,v_0]=v_1$;
\item[xii)]$\mathfrak{g}_{2,2}^1=\mathfrak{sl}_2(\mathbb{C})\oplus\mathfrak{r}_{2,2}^{1,1}:\quad$$[e,f]=h$, $[h,e]=2e$, $[h,f]=-2f$, $[h,v_0]=v_0$, $[h,v_1]=-v_1$, $[e,v_1]=v_0$, $[f,v_0]=v_1$, $[x,v_0]=v_0$, $[x,v_1]=v_1$; $[x,w_0]=2w_0$.
\item[xiii)]The $5$-dimensional $\mathfrak{n}_{2,3}$: $[v_0,v_1]=w_0$, $[v_i,w_0]=z_i, i=0,1$,
\item[xiv)]$\mathfrak{r}_{2,3}^{1}=\mathfrak{n}_{2,3}\oplus \mathbb{C}\cdot x:\quad$ $[x,v_0]=v_0+v_1$, $[x,v_1]=v_1$, $[x,w_0]=2w_0$, $[x,z_0]=3z_0+z_1$, $[x,z_1]=3z_1$;
\item[xv)]$\mathfrak{r}_{2,3}^{1,\alpha}=\mathfrak{n}_{2,3}\oplus \mathbb{C}\cdot x:\quad$ $[x,v_0]=v_0$, $[x,v_1]=\alpha v_1$,$[x,w_0]=(1+ \alpha)w_0$, $[x,z_0]=(2+ \alpha)z_0$,$[x,z_1]=(1+ 2\alpha)z_1$, $\alpha \in \mathbb{C}$;
\item[xvi)]$\mathfrak{r}_{2,3}^{2}=\mathfrak{n}_{2,3}\oplus \mathbb{C}\cdot x:\quad$$[x,v_0]=v_0+z_1$, $[x,w_0]=w_0$, $[x,z_0]=2z_0$, $[x,z_1]=z_1$;
\item[xvii)]$\mathfrak{r}_{2,3}^{3}=\mathfrak{n}_{2,3}\oplus \mathbb{C}\cdot x:\quad$ $[x,v_0]=v_0+z_0$, $[x,v_1]=-v_1$, $[x,z_0]=z_0$, $[x,z_1]=-z_1$;
\item[xviii)]$\mathfrak{r}_{2,3}^{4}=\mathfrak{n}_{2,3}\oplus \mathbb{C}\cdot x \oplus \mathbb{C}\cdot y:\quad$$[x,v_0]=[y,v_0]=v_0$, $[x,v_1]=-[y,v_1]=v_1$, $[x,w_0]=2w_0$, $[x,z_0]=3z_0$, $[x,z_1]=3z_1$, $[y,z_0]=z_0$, $[y,z_1]=-z_1$;
\item[xix)]$\mathfrak{g}_{2,3}=\mathfrak{sl}_2(\mathbb{C})\oplus\mathfrak{n}_{2,3}:\quad$$[e,f]=h$, $[h,e]=2e$, $[h,f]=-2f$, $[h,v_0]=v_0$, $[h,v_1]=-v_1$, $[e,v_1]=v_0$, $[f,v_0]=v_1$, $[h,z_0]=z_0$, $[h,z_1]=-z_1$,$[e,z_1]=z_0$, $[f,z_0]=z_1$, $[v_0,v_1]=w_0$, $[v_0,w_0]=z_0$ and $[v_1,w_0]=z_1$;
\item[xx)]$\mathfrak{g}_{2,3}^{1}=\mathfrak{sl}_2(\mathbb{C})\oplus\mathfrak{r}_{2,3}^{1,1}:$ $[e,f]=h$, $[h,e]=2e$, $[h,f]=-2f$, $[h,v_0]=v_0$, $[h,v_1]=-v_1$, $[e,v_1]=v_0$, $[f,v_0]=v_1$, $[h,z_0]=z_0$, $[h,z_1]=-z_1$,$[e,z_1]=z_0$, $[f,z_0]=z_1$, $[v_0,v_1]=w_0$, $[v_0,w_0]=z_0$, $[v_1,w_0]=z_1$, $[x,v_0]=v_0$, $[x,v_1]= v_1$, $[x,w_0]=2 w_0$,$[x,z_0]=3 z_0$, $[x,z_1]=3 z_1$; 
\item[xxi)]the direct sum as ideals $\mathfrak{s}\oplus \mathfrak{g}$, where $\mathfrak{s}$ is a semisimple Lie algebra and $ \mathfrak{g}$ one of the Lie algebras given in any of the previous items $i)-xx)$.
\end{itemize}
\end{thm}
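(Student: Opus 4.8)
The plan is to assemble Theorem \ref{conclusion} from the structural results already established, since it is essentially a collation of the case-by-case analysis of Sections 3 and 4. First I would invoke the decomposition recalled in the Introduction: any Lie algebra $\mathfrak{g}$ with nilradical $\mathfrak{n}_{2,t}$ splits as a direct sum of ideals $\mathfrak{g}=\mathfrak{s}_0\oplus\mathfrak{g}'$, where $\mathfrak{s}_0$ is a semisimple ideal acting trivially on the radical and $\mathfrak{g}'$ is faithful with the same nilradical. This immediately accounts for item xxi) and reduces the problem to classifying the faithful algebras, which I would then split into the solvable and the nonsolvable cases, treating $t=1,2,3$ separately and recording the bare nilradical as items i), vii), xiii).

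For the solvable faithful algebras I would apply Lemma \ref{solvExt}: the nilradical has codimension at most $2$ in the solvable radical, so every such algebra is a $1$- or $2$-extension $\mathfrak{n}_{2,t}\oplus\mathfrak{t}$ with $\mathfrak{t}$ projecting to an abelian, nilpotent-free subalgebra of $\mathrm{Der}_1\,\mathfrak{n}_{2,t}$. For each $t$ I would quote the normal forms already obtained in the subsections on solvable $1$- and $2$-extensions — the two $1$-extension derivations for $t=1,2$, the four summarized $1$-extension derivations for $t=3$, and the unique $2$-extension in each case — and then translate each normal-form matrix $D_{\mathbf{u}}^\beta$ into the bracket relations $[x,\cdot]$ displayed in items ii)-iv), viii)-x) and xiv)-xviii) by reading off its action on the graded basis $\{v_0,v_1,w_0,z_0,z_1\}$ of Proposition \ref{low free nilpotent}. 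The passage to an abelian complement $\mathfrak{t}_1$, obtained by replacing $y$ with $y-\frac{1}{k}[x,y]$ in the $2$-extensions, is precisely the reduction carried out at the end of each $2$-extension subsection and fixes the displayed multiplication tables.

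For the nonsolvable faithful algebras I would invoke Lemma \ref{nonsolvext}, which shows that, apart from $\mathfrak{g}_{2,t}$ itself, the only faithful nonsolvable possibility is $\mathfrak{g}(\delta)=\mathfrak{g}_{2,t}\oplus\mathbb{C}\cdot x$ with $\mathrm{ad}_{\mathfrak{n}_{2,t}}x=id_{2,t}+\delta$ for $\delta$ in the centralizer $C_{\mathrm{Der}\,\mathfrak{n}_{2,t}}(\mathrm{Der}_1^0\,\mathfrak{n}_{2,t})$. Using the centralizer computation preceding the final theorem of Section 4 — namely $\mathbb{C}\cdot id_{2,t}$ for $t=1,2$ and $\mathbb{C}\cdot id_{2,t}\oplus\mathbb{C}\cdot\mathrm{ad}\,\omega_0$ for $t=3$ — together with the fact that $\mathrm{ad}\,\omega_0$ is inner and hence removable by an automorphism, I would conclude that $\delta$ may be taken to be $0$. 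This yields exactly $\mathfrak{g}_{2,t}$ (items v, xi, xix) and $\mathfrak{g}_{2,t}^1=\mathfrak{g}(id_{2,t})$ (items vi, xii, xx), with the bracket tables read off from the $\mathfrak{sl}_2(\mathbb{C})$-module structure of Proposition \ref{low free nilpotent}.

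The routine but indispensable remaining step, and the one I expect to cause the most friction, is verifying completeness and pairwise non-isomorphism of the final list. Completeness amounts to checking that each normal form produced above corresponds to exactly one displayed table and that the displayed tables satisfy Jacobi, which is automatic since every entry arises as a derivation (or an abelian family of derivations whose bracket lands in the inner derivations, by \eqref{innercondition}) of an existing Lie algebra. Non-isomorphism follows from the coarse invariants (solvable versus nonsolvable, total dimension, dimension of the nilradical) combined with the automorphism-conjugacy reductions already performed in Section 3, which place the surviving derivations in distinct $\mathrm{Aut}\,\mathfrak{n}_{2,t}$-orbits. The only genuinely delicate point is the parametrized family $\mathfrak{r}_{2,t}^{1,\alpha}$: here the swap automorphism $v_0\leftrightarrow v_1$ followed by rescaling of $x$ identifies the diagonal derivations $\mathrm{diag}(1,\alpha)$ and $\mathrm{diag}(1,\alpha^{-1})$, so the eigenvalue parameter is an invariant only up to $\alpha\sim\alpha^{-1}$; I would state this explicitly to make the meaning of ``up to isomorphisms'' precise and to avoid double-counting within that family.
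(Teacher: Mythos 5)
Your proposal follows essentially the same route as the paper: Theorem \ref{conclusion} is presented there as a summary whose justification is precisely the assembly you describe, namely splitting off a trivially acting semisimple ideal, invoking Lemma \ref{solvExt} together with the normal forms of the solvable $1$- and $2$-extensions from Section 3, and invoking Lemma \ref{nonsolvext} with the centralizer computation from Section 4, then translating each surviving derivation into a multiplication table on the graded basis of Proposition \ref{low free nilpotent}. Your closing observation that the family $\mathfrak{r}_{2,t}^{1,\alpha}$ carries the residual identification $\alpha\sim\alpha^{-1}$ (via the swap $v_0\leftrightarrow v_1$ and rescaling of $x$) is correct and is a point the paper leaves implicit; it does not affect completeness of the list, only its non-redundancy.
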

\begin{rmk} Theorem \ref{conclusion} provides the complete classification of complex Lie algebras with nilradical the $3$-dimensional Heisenberg algebra $\mathfrak{n}_{2,2}$; the classification problem in case of solvable extensions was solved in \cite{RuWi93} even for the real field. We note that the technics we are used here can also be extended to provide the analogous classification problem over the real field for $\mathfrak{n}_{2,t}$, $t=1,2,3,4$.
\end{rmk}

\begin{rmk} Extensions of $\mathfrak{n}_{2,3}$ are treated in \cite[Section 3, subsection 3.2]{An11}. In this paper, $\mathfrak{n}_{2,3}$ is denoted as $\mathcal{L}_{5,3}$ and introduced in  Theorem 1, subsection 2.2, through the multiplication table $[X_0,X_1]=X_2, [X_0,X_2]=X_3, [X_1,X_2]=X_4$; we note that the basis $\{X_0,X_1,X_2,X_3,X_4\}$ is just our basis $\{v_0,v_1,w_0,z_1,z_2\}$. The extended Lie algebras appear listed in Propositions 2, 3 and 4. Proposition 2 provides the $1$-solvable extensions $\mathfrak{g}_{5,3}^{6,1}$, $\mathfrak{g}_{5,3}^{6,2}$, $\mathfrak{g}_{5,3}^{6,3}$ and $\mathfrak{g}_{5,3}^{6,4}$. The first algebra and the third one are just $\mathfrak{r}_{2,3}^{1,\alpha}$ and $\mathfrak{r}_{2,3}^{2}$. The second algebra is equal to $\mathfrak{r}_{2,3}^{3}$ by  rescaling the basis of $\mathfrak{g}_{5,3}^{6,2}$ given \cite{An11}  in the folowing way: $\{-Y; X_0,X_1,X_2,X_3,-X_4\}$. The multiplication table for $\mathfrak{g}_{5,3}^{6,4}$ has a misprint, $[Y,X_4]=3X_4$ must be declared instead of $4X_4$ (otherwise, $\mathrm{ad}\, Y$ is not a derivation, in other words, Jacobi identity $J(a,b,c)=[[a,b],c] +[[a,b],c]+[[a,b],c]=0$ fails for the terna $(a,b,c)=(Y,X_1,X_2)$); by doing the correctio, $\mathfrak{g}_{5,3}^{6,4}$ is just $\mathfrak{r}_{2,3}^{1}$. Proposition 3 provides the $2$-sovable extension $\mathfrak{g}_{5,3}^{7,1}$ which  is equal to our $\mathfrak{r}_{2,3}^{4}$ if we consider the new basis $\{Y+Z,Y-Z; X_0,X_1,X_2,X_3,X_4\}$.

In Proposition 4, the authors give the nonsolvable extensions $\mathfrak{g}_{5,3}^{8,1}$ and $\mathfrak{g}_{5,3}^{9,1}$. The former algebra is just our $\mathfrak{g}_{2,t}$ taking in account that $\{Y,Z,X\}$ is a standard basis of the split $3$-dimensional simple Lie algebra ($\{h,e,f\}$ with $[h,e]=2e, [h,f]=-2f,[e,f]=h$). But $\mathfrak{g}_{5,3}^{9,1}$ has been erroneously included in the classification, this is not a Lie algebra: $J(Y-Y',Z,X_1)=-X_0$ and $J(Y-Y',Z,X_4)=-X_3$. In this case we have not a misprint but an structural error: $\mathfrak{s}=\mathrm{span}\langle Y-Y',Z,X\rangle$ is just a $3$-split simple Lie algebra and $\mathcal{L}_{5,3}$ is $\mathrm{ad}\, \mathfrak{s}$-invariant, so $\mathcal{L}_{5,3}$ decomposes as a finite sum of $\mathfrak{s}$-irreducible modules; but the Cartan subalgebra $\mathbb{C}\cdot (Y-Y')$ acts  with $5$ different eigenvalues on $\mathcal{L}_{5,3}$ (exactly $1,-2,-1,0,-3$) with corresponding eigenvectors $X_0,X_1,X_2,X_3,X_4$; this fails in any $\mathfrak{s}$-module decomposition of $\mathcal{L}_{5,3}$. The final algebra that left in the classification is our Lie algebra $\mathfrak{g}_{2,3}^{1}$ described in Theorem \ref{conclusion},  item $xx)$.
\end{rmk}

\section*{Acknowledgements}
The authors would like to thank Spanish Government project  MTM 2010-18370-C04-03. Daniel de-la-Concepci\'on also thanks support from Spanish FPU grant 12/03224.

\section*{References}


\begin{thebibliography}{99}



\bibitem{An11}J.M. Ancochea-Berm\'udez, R. Campoamor-Stursberg, L. Garc\'ia Vergnolle: Classification of Lie algebras with naturally graded quasi-filiform nilradicals, J. Geometry Phys. 61 (2011), 2168-2186.

\bibitem{Au68}L. Auslander, J. Brezin: Almost algebraic Lie algebras, J. Algebra 8 (1968), 259-313.

\bibitem{BaOv11}
V.J. Del Barco, G.P. Ovando: Free nilpotent Lie algebras admitting ad-invariant metrics, J. Algebra 366 (2012), 2015-216.


\bibitem{BeCo12}
P. Benito, D. de-la-Concepci\'on: Sage computations of $\mathfrak{sl}_2(k)$-Levi extensions, Tibilisi Math. Journal 5 (2) (2012), 3-17.

\bibitem{BeCo13}
P. Benito, D. de-la-Concepci\'on:  On Levi extensions of nilpotent Lie algebras, Linear Alg. Appl. 439 (5) (2013), 1441-1457.





\bibitem{Ga73}
M.A.~Gauger: On the classification of metabelian Lie algebras, Trans. Amer. Math. Soc. 179 (1973) ,293--329.



\bibitem{Hu72}
J.E.~Humphreys: Introduction to Lie algebras and representation theory,  vol. 9, Springer-Verlag, New York, 1972 .

\bibitem{Ja62}
N.~Jacobson: Lie algebras, Dover Publications, Inc. New York, 1962.


\bibitem{Le63} G.~Leger: Derivations of Lie algebras III, Duke Math. J., 30 (1963), 637-645.



\bibitem{Mal44}
A.I.~Malcev: On semisimple subgroups of Lie groups, Izv. Akad. Nauk SSSR Ser. Mat. 8 (1944), 143-174.


\bibitem{Mal45}
A.I.~Malcev: Solvable Lie algebras, Izv. Akad. Nauk SSSR Ser. Mat. 9 (1945), 329-352; English transl., Amer. Math. Soc. Transl. (1) 9 (1962), 228-262. MR 9, 173.



\bibitem{Ok79}
S.~Okubo: A generalization of Hurwitz theorem and flexible Lie-admissible algebras, Hadronic Journal 3 (1979), 1-52.

\bibitem{Ok98}
S.~Okubo: Gauge theory based upon solvable Lie algebras, J. Phys. A: Math. Gen. 31 (1998), 7603-7609.

\bibitem{On76}
A.L. Onishchick, Y.B. Khakimdzhanov: On semidirect sums of Lie algebras, Mat. Zametki 18 (1975), 31-40. English transl.: Math. Notes 18 (1976), 600-604.

\bibitem{OnVi94}
A.L. Onishchick, E.B. Vinberg: Lie groups and Lie algebras III, Encyclopaedia of Mathematical Sciences, vol 41, Springer-Verlag, 1994.




\bibitem{RuWi93}
J.L.~Rubin, P. Winternitz: Solvable Lie algebras with Heisenberg ideals, J. Phys. A: Math. Gen. 26 (5) (1993) 1123-1138.


\bibitem{Sa70}
T.~Sato: The derivations of the Lie algebras, Tohoku. Math. Journ., 23 (1971) 21-36.


\bibitem{Tu92}
P.~Turkowski: Structure of real Lie algebras, Linear Alg. Applic. 171 (1992), 197-212.



\end{thebibliography}
\end{document}